\newfont{\teneufm}{eufm10}
\newfont{\seveneufm}{eufm7}
\newfont{\fiveeufm}{eufm5}
\def\bbbc{{\mathchoice {\setbox0=\hbox{$\displaystyle\rm C$}\hbox{\hbox
to0pt{\kern0.4\wd0\vrule height0.9\ht0\hss}\box0}}
{\setbox0=\hbox{$\textstyle\rm C$}\hbox{\hbox
to0pt{\kern0.4\wd0\vrule height0.9\ht0\hss}\box0}}
{\setbox0=\hbox{$\scriptstyle\rm C$}\hbox{\hbox
to0pt{\kern0.4\wd0\vrule height0.9\ht0\hss}\box0}}
{\setbox0=\hbox{$\scriptscriptstyle\rm C$}\hbox{\hbox
to0pt{\kern0.4\wd0\vrule height0.9\ht0\hss}\box0}}}}
\def\bbbq{{\mathchoice {\setbox0=\hbox{$\displaystyle\rm
Q$}\hbox{\raise 0.15\ht0\hbox to0pt{\kern0.4\wd0\vrule
height0.8\ht0\hss}\box0}} {\setbox0=\hbox{$\textstyle\rm
Q$}\hbox{\raise 0.15\ht0\hbox to0pt{\kern0.4\wd0\vrule
height0.8\ht0\hss}\box0}} {\setbox0=\hbox{$\scriptstyle\rm
Q$}\hbox{\raise 0.15\ht0\hbox to0pt{\kern0.4\wd0\vrule
height0.7\ht0\hss}\box0}} {\setbox0=\hbox{$\scriptscriptstyle\rm
Q$}\hbox{\raise 0.15\ht0\hbox to0pt{\kern0.4\wd0\vrule
height0.7\ht0\hss}\box0}}}}
\def\bbbt{{\mathchoice {\setbox0=\hbox{$\displaystyle\rm
T$}\hbox{\hbox to0pt{\kern0.3\wd0\vrule height0.9\ht0\hss}\box0}}
{\setbox0=\hbox{$\textstyle\rm T$}\hbox{\hbox
to0pt{\kern0.3\wd0\vrule height0.9\ht0\hss}\box0}}
{\setbox0=\hbox{$\scriptstyle\rm T$}\hbox{\hbox
to0pt{\kern0.3\wd0\vrule height0.9\ht0\hss}\box0}}
{\setbox0=\hbox{$\scriptscriptstyle\rm T$}\hbox{\hbox
to0pt{\kern0.3\wd0\vrule height0.9\ht0\hss}\box0}}}}
\def\bbbs{{\mathchoice
{\setbox0=\hbox{$\displaystyle     \rm S$}\hbox{\raise0.5\ht0\hbox
to0pt{\kern0.35\wd0\vrule height0.45\ht0\hss}\hbox
to0pt{\kern0.55\wd0\vrule height0.5\ht0\hss}\box0}}
{\setbox0=\hbox{$\textstyle        \rm S$}\hbox{\raise0.5\ht0\hbox
to0pt{\kern0.35\wd0\vrule height0.45\ht0\hss}\hbox
to0pt{\kern0.55\wd0\vrule height0.5\ht0\hss}\box0}}
{\setbox0=\hbox{$\scriptstyle      \rm S$}\hbox{\raise0.5\ht0\hbox
to0pt{\kern0.35\wd0\vrule height0.45\ht0\hss}\raise0.05\ht0\hbox
to0pt{\kern0.5\wd0\vrule height0.45\ht0\hss}\box0}}
{\setbox0=\hbox{$\scriptscriptstyle\rm S$}\hbox{\raise0.5\ht0\hbox
to0pt{\kern0.4\wd0\vrule height0.45\ht0\hss}\raise0.05\ht0\hbox
to0pt{\kern0.55\wd0\vrule height0.45\ht0\hss}\box0}}}}
\def\bbbz{{\mathchoice {\hbox{$\sf\textstyle Z\kern-0.4em Z$}}
{\hbox{$\sf\textstyle Z\kern-0.4em Z$}} {\hbox{$\sf\scriptstyle
Z\kern-0.3em Z$}} {\hbox{$\sf\scriptscriptstyle Z\kern-0.2em
Z$}}}}
\newtheorem{theorem}{Theorem}
\newtheorem{lemma}[theorem]{Lemma}
\def\squareforqed{\hbox{\rlap{$\sqcap$}$\sqcup$}}
\def\qed{\ifmmode\squareforqed\else{\unskip\nobreak\hfil
\penalty50\hskip1em\null\nobreak\hfil\squareforqed
\parfillskip=0pt\finalhyphendemerits=0\endgraf}\fi}
\def\cE{{\mathcal E}}
\def\cF{{\mathcal F}}
\def\cU{{\mathcal U}}
\def\cW{{\mathcal W}}
\newcommand{\ignore}[1]{}
\def\vec#1{\mathbf{#1}}
\def\lcm{{\mathrm{lcm}\,}}
\def \F{\mathbb{F}}
\def \Q{\mathbb{Q}}
\def\\{\cr}
\def\({\left(}
\def\){\right)}
\begin{document}

\title{Degree Growth,  Linear Independence and Periods 
of a Class of Rational Dynamical Systems}

\author{{\sc Alina Ostafe} \\
{Department of Computing, Macquarie University} \\
{Sydney, NSW 2109, Australia} \\
{\tt alina.ostafe@mq.edu.au}\\
\and
{\sc Igor Shparlinski} \\
{Department of Computing, Macquarie University} \\
{Sydney, NSW 2109, Australia} \\
{\tt igor.shparlinski@mq.edu.au}  
}

\maketitle
\begin{abstract} We introduce and study algebraic 
dynamical systems generated by triangular systems 
of rational functions. We obtain several  results 
about the degree growth and linear independence 
of iterates as well as about  possible lengths
of trajectories generated by such dynamical systems over 
finite fields.  Some of these results are generalisations
of those known in the polynomial case, some are new even
in this case.
\end{abstract}

\paragraph{\bf MSC(2010):} 
Primary 37P05; Secondary 11T06, 37P25, 65C10 
\section{Introduction}

Let $\F$ be an arbitrary field $\F$ and let $F_1,\ldots,F_m \in \F(X_1, \ldots, X_m)$
be  $m$ rational functions in $m$ variables over $\F$. 
For each $i=1, \ldots ,m$ we define the $k$-th iteration of the rational function $F_i$ by the recurrence relation
\begin{equation}
\label{eq:PolyIter}
F_i^{(0)}=X_i, \quad F_i^{(k)}= F_i\(F_1^{(k-1)}, \ldots ,F_m^{(k-1)}\), \quad 
k=1,2,\ldots\,  .
\end{equation}
In this paper we consider dynamical systems generated by multivariate rational functions, we refer 
to~\cite{AnKhr,Schm,Silv1} for a background on algebraic 
dynamical systems.

More precisely, we define the vectors
$\vec{u}_n =(u_{n,1} ,\ldots,u_{n,m})\in \F^m$
by the recurrence relation  
\begin{equation}
\label{eq:Gen}
u_{n+1,i}= F_i(u_{n,1},\ldots,u_{n,m}), \qquad n =0,1,\ldots, \quad i=1,\ldots,m,
\end{equation}
with some {\em initial vector} $\vec{u}_0 = (u_{0,1},\ldots,u_{0,m})\in \F^{m}$. 

As we work with rational functions, we make the standard 
convention (see~\cite{FN,NiSh2,NiSh3}) that 
\begin{equation}
\label{eq:Zero}
0^{-1}=0.
\end{equation}

Using the following vector notation
$$
\vec{F}=(F_1(X_1,\ldots,X_m),\ldots,F_m(X_1,\ldots,X_m)),
$$
we have the recurrence relation
\begin{equation}
\label{eq:Vec}
\vec{u}_{n+1}=\vec{F}(\vec{u}_{n}),
\quad n=0,1,\ldots.
\end{equation}
In particular, for any $n\ge 0$ and $i=1,\ldots,m$ we have
$$
u_{n,i} = F_i^{(n)}(\vec{u}_{0}) = F_i^{(n)}(u_{0,1},\ldots,u_{0,m})
$$
or $$\vec{u}_{n}=\vec{F}^{(n)}(\vec{u}_0),$$
provided that $\vec{u}_{n}$ has been  generated by~\eqref{eq:Vec} without 
using the convention~\eqref{eq:Zero}
(that is, no poles have been encountered).

Clearly, if we work over a finite field of $q$ 
elements, the above sequence~\eqref{eq:Vec} of vectors 
$\{\vec{u}_{n}\}$ is eventually periodic
with some period $\tau \le q^{m}$.

One of the important characteristics of the dynamical system 
generated by $F_1,\ldots,F_m \in \F(X_1, \ldots, X_m)$
is the degree growth of the functions~\eqref{eq:PolyIter}.
It  is of 
great interest for the theory of dynamical systems and has been studied
in a number of works, see, for example,~\cite{BeTr,Via} and references therein.
It is also important 
for applications to pseudorandom number generators~\cite{TopWin}. 

More precisely, 
although for a ``typical'' system an exponential degree growth is expected, 
there are several examples of systems where the degree 
grows much slower (which is highly beneficial for their applications), 
and such systems are of special interest.

For example, in~\cite{Ost2,OstShp1}  
several types of multivariate polynomial systems $\cF = \{F_1, \ldots ,F_{m}\}$ of $m$ polynomials 
in $m$ variables over a
finite field $\F_q$  have been constructed and studied,  having the ``triangular" form
\begin{equation}
\label{eq:syst}
\begin{split}
&F_1(X_1, \ldots ,X_m)=X_1G_1(X_2,\ldots,X_m)+H_1(X_2,\ldots,X_m),\\
  &\ldots  \\
&F_{m-1}(X_1, \ldots ,X_m)=X_{m-1}G_{m-1}(X_m)+H_{m-1}(X_m),\\
&F_m(X_1, \ldots ,X_m) = g_mX_m+h_m,
\end{split}
\end{equation}
with $G_i, H_i\in\F_q[X_{i+1},\ldots,X_m]$, $i=1,\ldots,m-1$, and $g_m, h_m\in\F_q$, $g_m\ne0$.
These systems have been further investigated in~\cite{Ost1,OstShp2,OstShpWin1,OstShpWin2}.

For the systems~\eqref{eq:syst}, 
in the case of constant polynomials $G_i  \in \F_q^*$ in~\cite{Ost2}
and polynomials $G_i$ with leading terms of special form in~\cite{OstShp1,OstShp2}, 
a series of results have been obtained about the distribution of the
corresponding sequences given by~\eqref{eq:Gen} 
that are much stronger than those known for 
generic systems. Moreover, for these classes of polynomials, it
has been shown in~\cite{OstShp1} that the degrees of the 
 iterations of the polynomials $F_i$, $i=1,\ldots,m$, grow 
significantly slower than the exponential growth  expected 
for the iterations of a ``generic'' system of  $m$ polynomials 
in $m$ variables. 
In turn, this leads (see~\cite{OstShp2}) to 
 much better estimates  
of exponential sums, and thus of discrepancy, for vectors 
generated by~\eqref{eq:syst} than for those 
originated from arbitrary polynomial systems (see~\cite{GNS,GG,OPS}).

We also note that the results obtained in~\cite{OstShp1,OstShp2} regarding the degree growth of the iterations of the polynomials in~\eqref{eq:syst} hold over any field $\F$.

In this paper we extend the class of rational dynamical systems with 
slow degree growth 
and present  an analogue of the  construction~\eqref{eq:syst}, 
but with rational functions defined by
\begin{equation}
\label{eq:ratsyst}
\begin{split}
&F_1(X_1, \ldots ,X_m)=X_1^{e_1}G_1(X_2,\ldots,X_m)+H_1(X_2,\ldots,X_m),\\
  &\ldots  \\
&F_{m-1}(X_1, \ldots ,X_m)=X_{m-1}^{e_{m-1}}G_{m-1}(X_m)+H_{m-1}(X_m),\\
&F_m(X_1, \ldots ,X_m) = g_mX_m^{e_m}+h_m,
\end{split}
\end{equation}
with $e_1,\ldots,e_m\in\{-1,1\}$, $G_i, H_i\in\F[X_{i+1},\ldots,X_m]$, $i=1,\ldots,m-1$, and $g_m, h_m\in\F$, $g_m\ne0$.

We note that for $m=1$ and $e=1$ we obtain the classical linear congruential generator which have been successully 
used for decades in the theory of Quasi Monte Carlo methods, 
see~\cite{Nied1,Nied2}, and for $m=1$ and $e=-1$, the classical inversive generator, see~\cite{NiSh2,NiSh3,NiSh4,NiWi}.

For the above class of multivariate rational functions, we study the 
degree growth under iterations and, using an 
approach similar to that  of~\cite[Lemma~1]{OstShp1},  we show in 
Section~\ref{sec:Deg}
that under certain additional conditions imposed on the 
systems of rational functions~\eqref{eq:ratsyst}, the degree grows polynomially.

Moreover, for applications to pseudorandom number generators, following the standard technique almost identical to that of~\cite{OstShp1},  one almost immediately obtains bounds on the exponential sums with elements of the sequence~\eqref{eq:Vec} generated by the 
system~\eqref{eq:ratsyst} (satisfying the conditions 
outlined in Section~\ref{sec:Deg}),  that in turn
leads to estimates on the uniformity of  
distribution of the vectors~\eqref{eq:Vec}.
However, one has also to prove that for any $k\ne l$ and
nonzero 
vector $\vec{a} = (a_1, \ldots, a_{m-1}) \in \F^{m-1}$, the linear combination
\begin{equation}
\label{eq:lincomb}
Q_{k,l,\vec{a}}=\sum_{i=1}^{m-1} a_i(F_i^{(k)}-F_i^{(l)})
\end{equation}
is a non-constant rational function.  We note that in the case of rational functions this does not follow directly from the degree argument as in the case of the polynomial systems~\eqref{eq:syst} in~\cite{OstShp1}, but we give such a result in 
Section~\ref{sec:LinIndep}.

 Since the derivation of such bounds of exponential sums
for our systems does not bring anything new to the area, we do not do this here 
but rather concentrate on the study of the degree and linear independence of 
iterates, which is also of interest for the general area of algebraic dynamics.

Furthermore, we   consider a related question about the length of trajectories generated 
by iterations~\eqref{eq:Vec} over a finite field $\F_q$. We remark that in this case a trajectory falls into a cycle if  $\vec{u}_{t} = \vec{u}_{s}$ for some integers $t > s \ge 0$. 
In particular, we   show that under some rather broad conditions 
for any fixed $\varepsilon >0$, for all but $o(q^{m})$ initial 
vectors $\vec{u}_0\in \F_q^m$, the trajectory length $t$ 
of the iterations~\eqref{eq:Vec} is at least $q^{1/3 -\varepsilon}$.

We note that Silverman~\cite{Silv2} has considered a question about periods of general polynomial systems but in somewhat dual situation when 
the initial value is fixed and the iterations are considered 
over a family of finite fields. The results of~\cite{Silv2}
apply to very general systems, however the estimates are 
only logarithmic rather than a power of the field size.

Moreover, we give necessary and sufficient conditions for the 
systems~\eqref{eq:ratsyst} to generate sequences of maximal period. We note that for the case $e_i=1$ for all $i=1,\ldots,m$, the maximal period length of the sequence generated by the system~\eqref{eq:syst} is achieved whenever the conditions of~\cite[Theorem~6]{Ost3} are satisfied.
Our result is a generalisation of that of~\cite{Ost3}. 

\section{Structure of the Iterations}

As in~\cite{OstShp1}, we can describe explicitly the iterations of the rational functions $F_i$ 
as follows.

Let us define the sets
\begin{equation}
\label{eq:I+-}
I_+=\{1\le i\le m~:~e_i=1\}\quad \text{and}
\quad I_-=\{1\le i\le m~:~e_i=-1\}.
\end{equation}

We also define 
\begin{eqnarray*}
G_i^{(\ell)} (X_{i+1},\ldots,X_m)&=&G_i\(F_{i+1}^{(\ell-1)},\ldots,F_m^{(\ell-1)}\),\\
H_i^{(\ell)} (X_{i+1},\ldots,X_m)&=&H_i\(F_{i+1}^{(\ell-1)},\ldots,F_m^{(\ell-1)}\).
\end{eqnarray*}

\begin{lemma}
\label{lem:LinTermSyst} 
Let $F_1, \ldots, F_m$ be rational functions defined by~\eqref{eq:ratsyst}. Then for  $i=1,\ldots,m-1$ 
and $k=0,1,\ldots$,
for the rational functions $F_i^{(k)}$ 
given by~\eqref{eq:PolyIter},

\begin{enumerate}
\item for every $i\in I_+$, $i < m$, we have
\begin{equation}
\label{eq:Fk1}
F_i^{(k)} = X_iG_{i,k} +H_{i,k},
\end{equation}
where $G_{i,k}, H_{i,k}\in\F(X_{i+1},\ldots,X_m)$ are defined by
\begin{equation}
\begin{split}
\label{eq:GH}
G_{i,k}&=G_iG_i^{(2)}\ldots G_i^{(k)},\\
H_{i,k} &=H_iG_i^{(2)}\ldots G_i^{(k)}+H_i^{(2)}G_i^{(3)}\ldots G_i^{(k)}+\ldots+H_i^{(k-1)}G_i^{(k)}+H_i^{(k)};
\end{split}
\end{equation} 
\item for every $i\in I_-$, $i < m$, we have:
\begin{equation}
\label{eq:Fk-1}
F_i^{(k)} = \frac{X_iR_{i,k} +S_{i,k}}{X_iR_{i,k-1} +S_{i,k-1}}, 
\end{equation}
where $R_{i,k},S_{i,k}$ are defined by the recurrence relations
\begin{equation}
\label{eq:RS}
\begin{split}
R_{i,k}&=G_i^{(k)}R_{i,k-2}+H_i^{(k)}R_{i,k-1}\\
S_{i,k}&=G_i^{(k)}S_{i,k-2}+H_i^{(k)}S_{i,k-1}
\end{split}
\end{equation} 
for $k\ge 1$, with the initial rational functions
 $$
 R_{i,0}=1,\quad S_{i,0}=0,\quad R_{i,1}=H_i,\quad S_{i,1}=G_i;
 $$

\item if $m \in I_+$, then 
$$
F_m^{(k)}=g_m^kX_m+(g_m^{k-1}+\ldots+g_m+1)h_m;
$$
\item if $m \in I_-$, then 
$$
F_m^{(k)}=\frac{(A^k)_{1,1}X_m+(A^k)_{1,2}}{(A^k)_{2,1}X_m+(A^k)_{2,2}},
$$
where 
$$
A^k=\(\begin{array}{cc}h_m & 
g_m\\
1&0\end{array}\)^k = \(\begin{array}{cc}(A^k)_{1,1} & 
(A^k)_{1,2}\\
(A^k)_{2,1}&(A^k){2,2}\end{array}\).
$$
\end{enumerate}
\end{lemma}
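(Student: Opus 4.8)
The plan is to establish all four parts by a single induction on the iteration index $k$, everything resting on one uniform recurrence for $F_i^{(k)}$. Since the function $F_i$ in \eqref{eq:ratsyst} involves only the variables $X_i,\ldots,X_m$, substituting \eqref{eq:PolyIter} and invoking the definitions of $G_i^{(\ell)}$ and $H_i^{(\ell)}$ given just before the lemma yields, for $i<m$,
$$
F_i^{(k)}=F_i\(F_1^{(k-1)},\ldots,F_m^{(k-1)}\)=\(F_i^{(k-1)}\)^{e_i}G_i^{(k)}+H_i^{(k)},
$$
and likewise $F_m^{(k)}=g_m\(F_m^{(k-1)}\)^{e_m}+h_m$ for $i=m$. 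Each of the four formulas is then obtained by inserting the inductive hypothesis for $F_i^{(k-1)}$ into the right-hand side and collecting the terms that are linear in $X_i$ (after clearing denominators when $e_i=-1$).

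First I would dispose of the cases $e_i=1$. For part~(1) the base case $k=1$ is $F_i^{(1)}=F_i=X_iG_i+H_i$, so that $G_{i,1}=G_i=G_i^{(1)}$ and $H_{i,1}=H_i=H_i^{(1)}$, in agreement with \eqref{eq:GH}; the inductive step gives
$$
F_i^{(k)}=\(X_iG_{i,k-1}+H_{i,k-1}\)G_i^{(k)}+H_i^{(k)}=X_i\(G_{i,k-1}G_i^{(k)}\)+\(H_{i,k-1}G_i^{(k)}+H_i^{(k)}\),
$$
whence $G_{i,k}=G_{i,k-1}G_i^{(k)}$ and $H_{i,k}=H_{i,k-1}G_i^{(k)}+H_i^{(k)}$, and unrolling these two recursions produces exactly \eqref{eq:GH}. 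Part~(3) is the special case $m\in I_+$, where the recurrence is the affine map $F_m^{(k)}=g_mF_m^{(k-1)}+h_m$ with $F_m^{(0)}=X_m$ --- the classical linear congruential generator --- whose closed form is the stated geometric sum and is also checked by a one-line induction.

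Next I would treat the cases $e_i=-1$, now working inside the field $\F(X_1,\ldots,X_m)$. For part~(4), observe that $F_m$ is the M\"obius transformation $X_m\mapsto(h_mX_m+g_m)/X_m$ attached to the matrix $A$ of part~(4), and that composition of M\"obius transformations corresponds to multiplication of the attached $2\times2$ matrices; hence $F_m^{(k)}$ is the transformation attached to $A^k$, which is precisely the asserted formula. For part~(2), the base cases $k=0,1$ are read off from $F_i^{(0)}=X_i$ and $F_i^{(1)}=F_i=(X_iH_i+G_i)/X_i$, matching the initial functions $R_{i,0},S_{i,0},R_{i,1},S_{i,1}$; for $k\ge2$ I would rewrite the recurrence as
$$
F_i^{(k)}=\frac{G_i^{(k)}+H_i^{(k)}F_i^{(k-1)}}{F_i^{(k-1)}},
$$
substitute $F_i^{(k-1)}=(X_iR_{i,k-1}+S_{i,k-1})/(X_iR_{i,k-2}+S_{i,k-2})$, clear the common denominator, and read off the coefficient of $X_i$ in the resulting numerator; as $R_{i,j}$ and $S_{i,j}$ are free of $X_i$, this produces the recursions \eqref{eq:RS} immediately.

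The only step that needs genuine care --- essentially the sole non-mechanical point --- is the bookkeeping in the $e_i=-1$ cases: all the displayed equalities must be understood as identities of rational functions in $\F(X_1,\ldots,X_m)$, and we never appeal to the convention \eqref{eq:Zero}, which concerns only evaluation at points of $\F$. To make the quotient in \eqref{eq:Fk-1} legitimate one must know that its denominator $X_iR_{i,k-1}+S_{i,k-1}$ --- equivalently $F_i^{(k-1)}$ itself --- is not the zero rational function, i.e.\ that $R_{i,k-1}$ and $S_{i,k-1}$ do not vanish simultaneously; this follows inductively from \eqref{eq:RS} together with the initial data (and is the place where, if necessary, one would invoke a mild nondegeneracy hypothesis such as $G_i\neq0$), after which the lemma reduces to the substitutions described above.
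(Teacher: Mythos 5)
Your proposal is correct and follows essentially the same route as the paper: a single induction on $k$ using the recurrence $F_i^{(k)}=\bigl(F_i^{(k-1)}\bigr)^{e_i}G_i^{(k)}+H_i^{(k)}$, substituting the inductive hypothesis and clearing denominators in the $e_i=-1$ case, with the $i=m$ cases handled by the affine/M\"obius closed forms (the paper simply cites its earlier work for the $e_i=1$ case rather than unrolling the recursion, and leaves the $A^k$ computation as a routine check). Your added remark on the denominator $X_iR_{i,k-1}+S_{i,k-1}$ being a nonzero rational function is a sensible bit of extra care that the paper passes over silently.
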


\begin{proof}
The case $e_i=1$, $i=1,\ldots,m$, is given by~\cite[Lemma 1]{OstShp2}. We consider now that $e_i=-1$ and prove the result by induction on the number of iterations $k$. For $k=1$ it is clear from the definition of the system, so we consider the statement true for the first $k-1$ iterations and we prove it for the $k$-th iteration. For $i=1,\ldots,m-1$, we have
\begin{equation*}
\begin{split}
F_i^{(k)}&=F_i(F_{i}^{(k-1)},F_{i+1}^{(k-1)},\ldots,F_m^{(k-1)})\\
&=\frac{F_{i}^{(k-1)}H_i^{(k)}+G_i^{(k)}}{F_{i}^{(k-1)}}=  \frac{\frac{X_iR_{i,k-1} +S_{i,k-1}}{X_iR_{i,k-2} +S_{i,k-2}}H_i^{(k)}+G_i^{(k)}}{\frac{X_iR_{i,k-1} +S_{i,k-1}}{X_iR_{i,k-2} +S_{i,k-2}}}\\
&= \frac{X_i(G_i^{(k)}R_{i,k-2}+H_i^{(k)}R_{i,k-1})+G_i^{(k)}S_{i,k-2}+H_i^{(k)}S_{i,k-1}}{X_iR_{i,k-1} +S_{i,k-1}}
\end{split}
\end{equation*} 
and thus we conclude this case. 
When $e_m=-1$, it is also clear as the $k$-th iteration of 
$$
F_m=\frac{h_mX_m+g_m}{X_m}
$$ is given by $A^k$ as simple  calculations show.
\end{proof}

We want to describe the degree growth of the iterations of the rational functions defined by~\eqref{eq:ratsyst}, and in particular to prove that we have the same effect of slow degree growth as for the polynomial systems~\eqref{eq:syst} described in~\cite[Lemma 1]{OstShp1}. 
To be able to give an explicit formula for the degree growth we need to impose some further conditions on the degrees of the polynomials $G_i$ and $H_i$, $i=1,\ldots,m-1$.

Let $F_1, \ldots, F_m$ be rational functions defined by~\eqref{eq:ratsyst}. From now on we  consider the system~\eqref{eq:ratsyst} satisfying the following conditions for $F_i$ for any $i=1,\ldots,m$: 
\begin{enumerate}
\item if $e_i=1$, as in~\cite{OstShp1,OstShp2}, we assume that the polynomial $G_i$ has a unique leading monomial $X_{i+1}^{s_{i,i+1}}\ldots X_m^{s_{i,m}}$, that is
$$G_i=g_iX_{i+1}^{s_{i,i+1}}\ldots X_m^{s_{i,m}}+ \widetilde{G}_i,$$
where  $g_i\in\F^*$ and $\widetilde{G}_i\in\F[X_{i+1},\ldots,X_m]$ with
\begin{equation}
\label{eq:deg1}
\deg_{X_j} \widetilde{G}_i<s_{i,j},\quad \deg_{X_j} H_i<s_{i,j},\quad j=i+1,\ldots,m;
\end{equation}
\item if $e_i=-1$, we assume that the polynomial $H_i$ has a unique leading monomial $X_{i+1}^{s_{i,i+1}}\ldots X_m^{s_{i,m}}$, that is
$$H_i=h_iX_{i+1}^{s_{i,i+1}}\ldots X_m^{s_{i,m}}+ \widetilde{H}_i,$$
where $h_i\in\F^*$ and $\widetilde{H}_i\in\F[X_{i+1},\ldots,X_m]$, and \begin{equation}
\label{eq:deg2}
\deg_{X_j} \widetilde{H}_i<s_{i,j},\quad \deg_{X_j} G_i<2s_{i,j},\quad j=i+1,\ldots,m.
\end{equation}
\end{enumerate}

We note that having these conditions   also allows us to consider the rational function system with constant multipliers $G_i$, 
$i=1,\ldots,m-1$. We remark that in~\cite{Ost2}, the case of constant polynomials $G_i$, $i=1,\ldots,m-1$, in the system~\eqref{eq:syst} was considered, but this case is different from the case of rational functions as the conditions on the degrees also differ, see~\eqref{eq:deg1} and~\eqref{eq:deg2}. Having this, we prove the following formula for the degree growth which coincides with~\cite[Lemma~1]{OstShp1}.

\section{Degree Growth}
\label{sec:Deg}

\begin{theorem}
\label{thm:deg}
Let $F_1, \ldots, F_m$ be rational functions defined by~\eqref{eq:ratsyst} satisfying the conditions~\eqref{eq:deg1} and~\eqref{eq:deg2} and
such that $s_{i,i+1}\ne 0$, $i=1,\ldots,m-1$. Then the degrees of the iterations of $F_1, \ldots ,F_m$ 
grow as follows
  \begin{eqnarray*} \deg F_i^{(k)}&=&\frac{1}{(m-i)!}k^{m-i}s_{i,i+1}\ldots s_{m-1,m}+\psi_i(k),\qquad  i=0,\ldots, m-1,\\
\deg F_m^{(k)}&=&1, 
\end{eqnarray*} 
where $\psi_i(T) \in \Q[T]$ is a polynomial of degree  $\deg \psi_i <m-i$.
\end{theorem}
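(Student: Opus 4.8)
The plan is to argue by downward induction on $i$. The base case $i=m$ is immediate from parts~(3) and~(4) of Lemma~\ref{lem:LinTermSyst}: in either case $F_m^{(k)}$ is a rational function of $X_m$ of degree one, so $\deg F_m^{(k)}=1$ for every $k\ge1$, which is the asserted formula with the empty product read as $1$ and $\psi_m=0$. Assuming the formula for $F_j^{(k)}$, $j=i+1,\ldots,m$ and all $k\ge1$ -- so that each $\deg F_j^{(k)}$ is a polynomial in $k$ of degree $m-j$ with leading coefficient $s_{j,j+1}\cdots s_{m-1,m}/(m-j)!$ -- I would first pin down $\deg G_i^{(\ell)}$ and $\deg H_i^{(\ell)}$, which is exactly what conditions \eqref{eq:deg1}--\eqref{eq:deg2} are designed for. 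Since $\deg_{X_j}G_i=s_{i,j}$ when $e_i=1$ (resp.\ $\deg_{X_j}H_i=s_{i,j}$ when $e_i=-1$) while every other monomial has strictly smaller $X_j$-degrees, and $\deg F_j^{(\ell-1)}\ge1$, the distinguished monomial dominates after substitution and one gets
\[
\deg G_i^{(\ell)}=\sum_{j=i+1}^m s_{i,j}\deg F_j^{(\ell-1)}\ \ (e_i=1),\qquad
\deg H_i^{(\ell)}=\sum_{j=i+1}^m s_{i,j}\deg F_j^{(\ell-1)}\ \ (e_i=-1),
\]
together with the coarse bound $\deg G_i^{(\ell)}\le\sum_{j}(2s_{i,j}-1)\deg F_j^{(\ell-1)}<2\deg H_i^{(\ell)}$ in the second case, which is the only role of \eqref{eq:deg2}. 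By the inductive hypothesis each of these is a polynomial in $\ell$ of degree $m-i-1$ with leading coefficient $s_{i,i+1}\cdots s_{m-1,m}/(m-i-1)!$, the term $j=i+1$ dominating because $s_{i,i+1}\ne0$ and $\deg F_{i+1}^{(\ell-1)}$ has strictly larger degree in $\ell$ than the other $\deg F_j^{(\ell-1)}$.

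If $i\in I_+$, then \eqref{eq:Fk1} gives $\deg F_i^{(k)}=\max\bigl(1+\deg G_{i,k},\deg H_{i,k}\bigr)$; from \eqref{eq:GH} one has $\deg G_{i,k}=\sum_{\ell=1}^k\deg G_i^{(\ell)}$, while every summand of $H_{i,k}$ is of the form $H_i^{(a)}G_i^{(a+1)}\cdots G_i^{(k)}$ with $\deg H_i^{(a)}<\deg G_i^{(a)}$ (because $\deg_{X_j}H_i<s_{i,j}=\deg_{X_j}G_i$), hence has degree strictly below $\deg G_{i,k}$. So $\deg F_i^{(k)}=1+\sum_{\ell=1}^k\deg G_i^{(\ell)}$, and summing a degree $m-i-1$ polynomial in $\ell$ over $1\le\ell\le k$ gives a degree $m-i$ polynomial in $k$ with leading coefficient $\frac1{m-i}\cdot\frac{s_{i,i+1}\cdots s_{m-1,m}}{(m-i-1)!}=\frac{s_{i,i+1}\cdots s_{m-1,m}}{(m-i)!}$, the rest being $\psi_i$.

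If $i\in I_-$, I would use \eqref{eq:Fk-1}--\eqref{eq:RS} and track $\deg R_{i,k}$ and $\deg S_{i,k}$ by a secondary induction on $k$. The bound $\deg G_i^{(k)}<2\deg H_i^{(k)}$, combined with the already-established fact that $\deg H_i^{(k)}-\deg H_i^{(k-1)}$ has strictly smaller order in $k$ than $\deg F_{i+1}^{(k-1)}$, shows that for all large $k$ the term $H_i^{(k)}R_{i,k-1}$ strictly dominates $G_i^{(k)}R_{i,k-2}$ in \eqref{eq:RS}, so $\deg R_{i,k}=\deg R_{i,k-1}+\deg H_i^{(k)}$ from some point on; summing $\deg H_i^{(\ell)}$ then makes $\deg R_{i,k}$ a polynomial of degree $m-i$ in $k$ with leading coefficient $s_{i,i+1}\cdots s_{m-1,m}/(m-i)!$ for large $k$. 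One then checks $\deg S_{i,k}\le1+\deg R_{i,k}$, that the numerator $X_iR_{i,k}+S_{i,k}$ of \eqref{eq:Fk-1} has strictly larger degree than its denominator, and that the two are coprime -- their $X_i$-resultant is $\pm G_iG_i^{(2)}\cdots G_i^{(k)}\ne0$, and one checks the contents do not cancel -- so that $\deg F_i^{(k)}=1+\deg R_{i,k}$. This gives the formula for all large $k$; inspecting the finitely many small values (where the dominant term in \eqref{eq:RS} can differ, but the resulting degree still agrees with a polynomial of degree $\le m-i$ in $k$, as the explicit recurrences show) extends it to all $k\ge1$, the lower-order part being absorbed into $\psi_i$.

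The step I expect to be the main obstacle, in both cases, is ruling out degree-lowering cancellations -- in the substitutions $G_i^{(\ell)}=G_i(F_{i+1}^{(\ell-1)},\ldots,F_m^{(\ell-1)})$ and $H_i^{(\ell)}=H_i(F_{i+1}^{(\ell-1)},\ldots,F_m^{(\ell-1)})$, in the products and sums defining $G_{i,k},H_{i,k},R_{i,k},S_{i,k}$, and in the reduction of the fraction in \eqref{eq:Fk-1} -- since for a rational function the degree, the maximum of numerator and denominator degrees after reduction, is neither additive nor multiplicative. The unique-leading-monomial hypotheses \eqref{eq:deg1}--\eqref{eq:deg2} are precisely what precludes these, but to use them I would strengthen the induction hypothesis so that it also records the top-degree behaviour of each $F_j^{(k)}$ -- that the degree is attained on the numerator, with a controlled leading form, and that degrees of products of iterates from distinct rows add up -- and propagate this through the recurrences. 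The triangular shape of \eqref{eq:ratsyst}, by which $F_j^{(k)}$ involves only $X_j,\ldots,X_m$ and is of $X_j$-degree one, is what should keep this bookkeeping under control.
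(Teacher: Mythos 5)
Your proposal follows essentially the same route as the paper: it rests on the explicit formulas of Lemma~\ref{lem:LinTermSyst}, reduces $\deg F_i^{(k)}$ to $1+\sum_{\ell\le k}\deg G_i^{(\ell)}$ (case $e_i=1$) respectively $1+\deg R_{i,k}=1+\sum_{j\le k}\deg H_i^{(j)}$ (case $e_i=-1$) via the degree conditions~\eqref{eq:deg1}--\eqref{eq:deg2}, and then inducts on the number of remaining variables and sums a power of $j$ to get the leading coefficient $s_{i,i+1}\cdots s_{m-1,m}/(m-i)!$, exactly as in the paper's proof (which invokes Bernoulli polynomials for the last step). Your extra care about non-cancellation (the resultant identity $R_{i,k}S_{i,k-1}-R_{i,k-1}S_{i,k}=\pm G_iG_i^{(2)}\cdots G_i^{(k)}$, which is the paper's Lemma~\ref{lem:prod}) only makes explicit what the paper treats as ``easy to see'', so the argument is correct and not a genuinely different proof.
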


\begin{proof} The proof is based on Lemma~\ref{lem:LinTermSyst}. 
The case $e_i=1$, using~\eqref{eq:Fk1} and~\eqref{eq:Fk-1}, follows exactly the same as in~\cite[Lemma~1]{OstShp1}.

We prove now the case when $e_i=-1$. Using the conditions~\eqref{eq:deg1} and~\eqref{eq:deg2} and the recurrence relation~\eqref{eq:RS}, it is easy to see that
\begin{equation}
\label{eq:deg3}
\begin{split}
\deg F_i^{(k)}&=\deg R_{i,k}+1=\deg H_i^{(k)}R_{i,k-1}+1\\
&=\deg H_iH_i^{(2)}\ldots H_i^{(k)}+1=\sum_{j=1}^k\deg H_i^{(j)}+1.
\end{split}
\end{equation}
As in~\cite[Lemma~1]{OstShp1}, we use induction on the number of variables $m$. For $m=2$ we easily see that $\deg H_1^{(j)}=\deg H_1=s_{1,2}$, and thus, by~\eqref{eq:deg3}, we have that $\deg F_1^{(k)}=k s_{1,2}+1$. 
We assume now that the theorem is true 
for $m-1$ variables and we prove it for $m$. For any $i=1,\ldots,m-1$, by the induction hypothesis, we have
\begin{equation*}
\begin{split}
\deg F_i^{(k)}&=\sum_{j=1}^k\deg H_i^{(j)}+1=\sum_{j=1}^k\deg H_i(F_{i+1}^{(j-1)},\ldots,F_m^{(j-1)})+1\\
&=\sum_{j=1}^k\deg \((F_{i+1}^{(j-1)})^{s_{i,i+1}}\ldots (F_m^{(j-1)})^{s_{i,m}}\)+1\\
&=\sum_{j=1}^k\left( \frac{1}{(m-i-1)!}(j-1)^{m-i-1}s_{i,i+1}s_{i+1,i+2}\ldots s_{m-1,m}+\right.\\ 
& \qquad\qquad\qquad\qquad\qquad\qquad\qquad \ldots\left.+(j-1)s_{i,m}s_{m-1,m}\right)+1.
\end{split}
\end{equation*}
As
$$
\sum_{j=1}^k j^{m-1-i}=\frac{1}{m-i}(B_{m-i}(k+1)-B_{m-i}(0)),
$$
where $B_{m-i}$ is  the Bernoulli polynomial of degree $m-i$ 
(which has the leading coefficient equal to $1$), we finally obtain the desired result.
\end{proof}

\section{Linear Independence}
\label{sec:LinIndep}

\begin{theorem}
\label{thm:indep}
Let $F_1, \ldots, F_m$ be rational functions defined by~\eqref{eq:ratsyst} 
satisfying the conditions~\eqref{eq:deg1} and~\eqref{eq:deg2} and
such that $s_{i,i+1}\ne 0$, $i=1,\ldots,m-1$. Then, for $k\ne l$ and a nonzero 
vector $\vec{a}  \in \F^{m-1}$,  $Q_{k,l,\vec{a}}$ is 
a non-constant rational function.
\end{theorem}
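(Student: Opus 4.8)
The plan is to analyze $Q_{k,l,\vec a}$ by focusing on its dependence on the single variable $X_1$, exploiting the triangular structure of the system. Write $\vec a = (a_1,\ldots,a_{m-1})$ and let $i_0$ be the smallest index with $a_{i_0}\neq 0$. The key observation is that among $F_1^{(k)},\ldots,F_{m-1}^{(k)}$, only $F_{i_0}^{(k)}$ actually involves $X_{i_0}$ in a way not shared by the others: by Lemma~\ref{lem:LinTermSyst}, $F_j^{(k)}$ depends only on $X_j,\ldots,X_m$, so $X_{i_0}$ appears (among the terms with $a_j\neq 0$) only through $F_{i_0}^{(k)}$ and $F_{i_0}^{(l)}$. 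Therefore $Q_{k,l,\vec a}$ is constant as a rational function in all variables if and only if $a_{i_0}\bigl(F_{i_0}^{(k)}-F_{i_0}^{(l)}\bigr)$ is independent of $X_{i_0}$ and the remaining sum $\sum_{j>i_0} a_j\bigl(F_j^{(k)}-F_j^{(l)}\bigr)$ cancels it; so it suffices to show that $F_{i_0}^{(k)}-F_{i_0}^{(l)}$, as a rational function, genuinely depends on $X_{i_0}$. (Once that fails to be cancellable, $Q_{k,l,\vec a}$ is nonconstant.) Reindexing, it is enough to prove: for each $i<m$ and $k\neq l$, the rational function $F_i^{(k)}-F_i^{(l)}$ is nonconstant in $X_i$.

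For $i\in I_+$ this is immediate from the explicit form~\eqref{eq:Fk1}: $F_i^{(k)}-F_i^{(l)} = X_i(G_{i,k}-G_{i,l}) + (H_{i,k}-H_{i,l})$, and by Theorem~\ref{thm:deg} (or directly from~\eqref{eq:GH} and the degree conditions~\eqref{eq:deg1}) the rational functions $G_{i,k}$ and $G_{i,l}$ have different degrees when $k\neq l$, since $\deg G_{i,k} = \deg G_i^{(k)} + \deg G_{i,k-1}$ strictly increases with $k$ (here $s_{i,i+1}\neq 0$ is used). Hence $G_{i,k}-G_{i,l}\neq 0$ and the coefficient of $X_i$ is nonzero. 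For $i\in I_-$ we use~\eqref{eq:Fk-1}:
$$
F_i^{(k)}-F_i^{(l)} = \frac{X_iR_{i,k}+S_{i,k}}{X_iR_{i,k-1}+S_{i,k-1}} - \frac{X_iR_{i,l}+S_{i,l}}{X_iR_{i,l-1}+S_{i,l-1}}.
$$
This is independent of $X_i$ precisely when the numerator of the difference vanishes identically in $X_i$, i.e. when $(X_iR_{i,k}+S_{i,k})(X_iR_{i,l-1}+S_{i,l-1}) = (X_iR_{i,l}+S_{i,l})(X_iR_{i,k-1}+S_{i,k-1})$ as polynomials in $X_i$ over $\F(X_{i+1},\ldots,X_m)$. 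Comparing the $X_i^2$-coefficients gives $R_{i,k}R_{i,l-1}=R_{i,l}R_{i,k-1}$, and the $X_i^0$-coefficients give $S_{i,k}S_{i,l-1}=S_{i,l}S_{i,k-1}$. The strategy is to rule this out by a degree count: from~\eqref{eq:RS} and the conditions~\eqref{eq:deg2} (which force $\deg H_i^{(j)} > \deg G_i^{(j)}$, the leading monomial of $H_i$ having all exponents $s_{i,j}$ while those of $G_i$ are $< 2s_{i,j}$, together with $s_{i,i+1}\neq 0$), one shows $\deg R_{i,k} = \sum_{j=1}^k \deg H_i^{(j)}$ is strictly increasing in $k$, and likewise for $S_{i,k}$ (note $\deg S_{i,k} = \deg R_{i,k} + \deg G_i - \deg H_i$ is a consistent shift, using $S_{i,1}=G_i$). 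Then $\deg(R_{i,k}R_{i,l-1}) = \deg R_{i,k}+\deg R_{i,l-1} \neq \deg R_{i,k-1}+\deg R_{i,l} = \deg(R_{i,l}R_{i,k-1})$ whenever $k\neq l$ (the two sums differ by $\deg H_i^{(k)} - \deg H_i^{(l)}\neq 0$ after telescoping), a contradiction. Hence $F_i^{(k)}-F_i^{(l)}$ genuinely involves $X_i$.

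The main obstacle I anticipate is the bookkeeping in the $I_-$ case: one must be careful that the leading terms in the numerator of $F_i^{(k)}-F_i^{(l)}$ (viewed over the function field $\F(X_{i+1},\ldots,X_m)$) do not accidentally cancel — the cleanest route is the degree argument above, which sidesteps any sign or coefficient cancellation, but it requires first establishing the strict monotonicity of $\deg R_{i,k}$ and $\deg S_{i,k}$ from the recurrences~\eqref{eq:RS}, and verifying that the dominant contribution in~\eqref{eq:RS} always comes from the $H_i^{(k)}R_{i,k-1}$ (resp. $H_i^{(k)}S_{i,k-1}$) term rather than the $G_i^{(k)}R_{i,k-2}$ term — this is exactly what~\eqref{eq:deg2} is designed to guarantee, and it is already implicit in the proof of Theorem~\ref{thm:deg} (see~\eqref{eq:deg3}). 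A secondary point worth a sentence is handling the convention~\eqref{eq:Zero} and the possibility that some iterate is formally undefined: since we argue at the level of rational functions in the $X_i$, this does not arise, but the statement should be read as an identity of rational functions.
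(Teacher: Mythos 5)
Your overall strategy coincides with the paper's: reduce to the smallest index $s$ with $a_s\neq 0$ using the triangular structure, settle $s\in I_+$ via $G_{s,k}\neq G_{s,l}$, and for $s\in I_-$ reduce to the nonvanishing of the $X_s^2$-coefficient $R_{s,k}R_{s,l-1}-R_{s,k-1}R_{s,l}$ of the numerator of $F_s^{(k)}-F_s^{(l)}$. The gap is in your last step for the $I_-$ case. You want a contradiction from $\deg\bigl(R_{s,k}R_{s,l-1}\bigr)\neq\deg\bigl(R_{s,k-1}R_{s,l}\bigr)$, and after telescoping this difference equals $\deg H_s^{(k)}-\deg H_s^{(l)}$, which you assert is nonzero for $k\neq l$. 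That is correct for $s\le m-2$, where $\deg H_s^{(j)}$ contains the strictly growing contribution $s_{s,s+1}\deg F_{s+1}^{(j-1)}$, but it fails for $s=m-1$: by parts (3) and (4) of Lemma~\ref{lem:LinTermSyst} one has $\deg F_m^{(j)}=1$ for every $j$, so $\deg H_{m-1}^{(j)}=s_{m-1,m}$ does not depend on $j$, hence $\deg\bigl(R_{m-1,k}R_{m-1,l-1}\bigr)=\deg\bigl(R_{m-1,k-1}R_{m-1,l}\bigr)$ and your degree comparison yields no contradiction. This case is not marginal: it is exactly the situation $a_1=\cdots=a_{m-2}=0$, $a_{m-1}\neq 0$ with $e_{m-1}=-1$, and it is the whole theorem when $m=2$. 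Equality of the degrees of the two products does not, of course, force them to be equal as polynomials, but your argument offers nothing beyond the degree count, so the nonvanishing of $R_{m-1,k}R_{m-1,l-1}-R_{m-1,k-1}R_{m-1,l}$ is left unproved.

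The paper works with the difference itself rather than only with its degree: substituting the recurrence~\eqref{eq:RS} for $R_{s,k}$ and $R_{s,l}$, it rewrites $R_{s,k}R_{s,l-1}-R_{s,k-1}R_{s,l}$ as $(H_s^{(k)}-H_s^{(l)})R_{s,k-1}R_{s,l-1}$ plus the terms $G_s^{(k)}R_{s,k-2}R_{s,l-1}-G_s^{(l)}R_{s,k-1}R_{s,l-2}$, and then isolates a dominant term of large degree, see~\eqref{eq:RRRR}; this finer, identity-based step is precisely what a pure degree comparison cannot replace. To repair your proof you would need an argument of that type, or a separate treatment of $s=m-1$ (for instance, comparing leading coefficients in $X_m$, or exploiting a determinant identity in the spirit of Lemma~\ref{lem:prod}), rather than only the strict monotonicity of $\deg R_{s,k}$. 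The reduction to the smallest index, the $I_+$ case, and your appeal to~\eqref{eq:deg3} for $\deg R_{i,k}=\sum_{j=1}^k\deg H_i^{(j)}$ are all fine and agree with the paper.
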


\begin{proof}
The proof reduces to proving that 
$\deg (F_s^{(k)}-F_s^{(l)})>1$, where $s\le m-1$ is the smallest index such that $a_s\ne0$, as the variable $X_s$ does not appear in the polynomial 
$$
Q_{k,l,\vec{a}} - a_s  (F_s^{(k)}-F_s^{(l)}) = 
\sum_{i=s+1}^{m-1} a_i(F_i^{(k)}-F_i^{(l)}).
$$ 
If $e_s=1$, it is clear, as from Theorem~\ref{thm:deg}, for $k>l$ we have $\deg G_{s,k}>\deg G_{s,l}$. If $e_s=-1$, by~\eqref{eq:Fk-1}, we have
\begin{equation*}
\begin{split}
F_s^{(k)}-F_s^{(l)}&=\frac{X_sR_{s,k} +S_{s,k}}{X_sR_{s,k-1} +S_{s,k-1}}-\frac{X_sR_{s,l} +S_{s,l}}{X_sR_{s,l-1} +S_{s,l-1}} = \frac{U_{k,l,s}}{V
_{k,l,s}},
\end{split}
\end{equation*}
where 
\begin{equation*}
\begin{split}
U_{k,l,s}
= X_s^2(R_{s,k}&R_{s,l-1}-R_{s,k-1}R_{s,l})\\
+~&X_s(R_{s,k}S_{s,l-1}+S_{s,k}R_{s,l-1}-R_{s,k-1}S_{s,l} 
-S_{s,k-1}R_{s,l})\\
&+~S_{s,k}S_{s,l-1}-S_{s,k-1}S_{s,l}.
\end{split}
\end{equation*}
and 
$$
V_{k,l,s} = X_s^2R_{s,k-1}R_{s,k-1} +X_s (R_{s,k-1}S_{s,l-1} +R_{s,l-1}S_{s,k-1}) +
S_{s,k-1} S_{s,l-1}.
$$

Without loss of generality we may assume that $k>l$. 
Using~\eqref{eq:RS},  we obtain
\begin{equation*}
\begin{split}
R_{s,k}R_{s,l-1}-R_{s,k-1}R_{s,l}&=(G_s^{(k)}R_{s,k-2}+H_s^{(k)}R_{s,k-1})R_{s,l-1}\\
&\qquad\qquad\qquad\qquad-R_{s,k-1}(G_s^{(l)}R_{s,l-2}+H_s^{(l)}R_{s,l-1}),
\end{split}
\end{equation*}
and thus,
using Lemma~\ref{lem:LinTermSyst}   and Theorem~\ref{thm:deg}, we derive
\begin{equation}
\begin{split}
\label{eq:RRRR}
\deg (R_{s,k}R_{s,l-1}-R_{s,k-1}&R_{s,l})=\deg (H_s^{(k)}-H_s^{(l)})R_{s,k-1}R_{s,l-1}\\
&=\deg H_s^{(k)}R_{s,k-1}R_{s,l-1}>\deg R_{s,k-1}R_{s,l-1}>1
\end{split}
\end{equation}
for $k>l$, which concludes the proof.
\end{proof}

Note that, as in~\cite{OstShp2}, we can include $m$-term  linear  
combinations
$$
\overline Q_{k,l,\vec{a}}=\sum_{i=1}^{m } a_i(F_i^{(k)}-F_i^{(l)})
$$
with $\vec{a}\in\F^{m}$, but  in the case of $a_1 = \ldots = a_{m-1} = 0$, $a_m \ne 0$,
the nontriviality also depends on the divisibility of $k-l$ by
the multiplicative order of $g_m$. 

\section{Trajectory Lengths}

In this section we work over a finite field $\F_q$.

\begin{theorem}
\label{thm:traj}
Let $F_1, \ldots, F_m$ be rational functions defined by~\eqref{eq:ratsyst} satisfying the conditions~\eqref{eq:deg1} and~\eqref{eq:deg2} and
such that $s_{i,i+1}\ne 0$, $i=1,\ldots,m-1$. 
Then, for any $T\ge 1$ for all but $O(T^3q^{m-1})$ initial 
vectors $\vec{u}_0\in \F_q^m$, the trajectory length 
of the iterations~\eqref{eq:Vec} exceeds $T$.
\end{theorem}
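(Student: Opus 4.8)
The plan is to prove the bound by induction on the number $m$ of variables, exploiting the triangular shape of~\eqref{eq:ratsyst}. First note that the trajectory length exceeds $T$ exactly when $\vec u_0,\vec u_1,\dots,\vec u_T$ are pairwise distinct, so it suffices to show that the set $\mathcal B$ of $\vec u_0\in\F_q^m$ with $\vec u_s=\vec u_t$ for some $0\le s<t\le T$ satisfies $|\mathcal B|=O(T^3q^{m-1})$.

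For the inductive step one uses the autonomous subsystem at the bottom of the triangle. Since $F_i$ does not involve $X_1$ for $i\ge 2$, the functions $F_2,\dots,F_m$ form, in the variables $X_2,\dots,X_m$, a system of the shape~\eqref{eq:ratsyst} in $m-1$ variables still satisfying~\eqref{eq:deg1},~\eqref{eq:deg2} and $s_{i,i+1}\ne 0$ for $i=2,\dots,m-1$, and its trajectory issued from $(u_{0,2},\dots,u_{0,m})$ is exactly the truncation $(u_{n,2},\dots,u_{n,m})_{n\ge 0}$ (convention~\eqref{eq:Zero} included). By the induction hypothesis, all but $O(T^3q^{m-2})$ of the $q^{m-1}$ choices of $(u_{0,2},\dots,u_{0,m})$ make the truncations $(u_{n,2},\dots,u_{n,m})$, $n=0,\dots,T$, pairwise distinct; whenever that holds the full vectors $\vec u_0,\dots,\vec u_T$ are a fortiori distinct, for every $u_{0,1}\in\F_q$. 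Hence $|\mathcal B|\le q\cdot O(T^3q^{m-2})=O(T^3q^{m-1})$.

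The base case is $m=2$, where I would argue through the first coordinate. Write $F_1^{(k)}=P_k/Q_k$ in lowest terms. If $\vec u_0\in\mathcal B$ through a pair $s<t$ and the convention~\eqref{eq:Zero} is not invoked during the first $t$ iterations, then $u_{s,1}=F_1^{(s)}(\vec u_0)$ and $u_{t,1}=F_1^{(t)}(\vec u_0)$, so $\vec u_0$ is a zero of $P_sQ_t-P_tQ_s$; this polynomial is nonzero (by Theorem~\ref{thm:indep}, or simply because $\deg F_1^{(k)}=ks_{1,2}+1$ is strictly increasing in $k$ by Theorem~\ref{thm:deg}) and has total degree $O(\deg F_1^{(t)})=O(T)$, so it has $O(Tq)$ zeros in $\F_q^2$ by the Schwartz--Zippel bound; summing over the $O(T^2)$ pairs gives $O(T^3q)$ vectors. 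The remaining $\vec u_0$, for which a pole is met within $T$ steps, lie in $\bigcup_{n\le T}\bigl(\{P_n=0\}\cup\{\mathrm{num}\,F_2^{(n)}=0\}\bigr)$, a set of size $O(T^2q)$ since $\deg F_1^{(n)}=O(T)$ and $\deg F_2^{(n)}=1$. Thus $|\mathcal B|=O(T^3q)$ for $m=2$, completing the induction.

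The essential point — and the reason a one-line Schwartz--Zippel argument is not enough — is the choice to peel off the outermost variable and induct downward rather than to bound the zero set of $F_1^{(s)}-F_1^{(t)}$ in the full $m$-variable system directly: Theorem~\ref{thm:deg} gives $\deg F_1^{(k)}\asymp k^{m-1}$, so the naive count is only $O(T^{m+1}q^{m-1})$, too weak once $m\ge 3$; the triangular structure is exactly what turns this polynomial degree growth into one harmless factor of $q$ per descent. The only real technicality is tracking the exceptional set on which $0^{-1}=0$ is used, and this appears only in the base case, where all rational functions in play have degree $O(T)$; in the inductive step one works solely with the subsystem's own well-defined trajectory, so the issue does not arise.
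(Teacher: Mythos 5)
Your argument is correct, but it is organised quite differently from the paper's. You peel off the outermost variable and induct on $m$: since $F_2,\ldots,F_m$ do not involve $X_1$, a collision of the full vectors forces a collision of their projections, so the bad set sits inside $\F_q$ times the bad set of the $(m-1)$-variable subsystem, and everything reduces to the two-variable bottom system; there you count zeros of $P_sQ_t-P_tQ_s$ (nonzero by Theorem~\ref{thm:indep}, or by the strict degree growth of Theorem~\ref{thm:deg}, and of total degree $O(T)$) via the Schwartz--Zippel bound, plus a separate $O(T^2q)$ count for orbits that meet a pole. The paper instead argues directly in $m$ variables: it bounds the set of initial vectors whose orbit acquires a zero coordinate within $T$ steps by $O(Tq^{m-1})$ through a unique-preimage argument for the map~\eqref{eq:ratsyst} off the hypersurfaces $G_i=0$, removes a set of size $O(T^3q^{m-1})$ on which $G_{m-1,t}-G_{m-1,s}$ (or its analogue $R_{m-1,t}R_{m-1,s-1}-R_{m-1,t-1}R_{m-1,s}$) vanishes, and then uses the explicit form of $F_{m-1}^{(k)}$ from Lemma~\ref{lem:LinTermSyst}, linear (or fractional linear) in $X_{m-1}$, to conclude that in any remaining collision $u_{0,m-1}$ is determined by $u_{0,m}$, contributing only $O(T^2q^{m-1})$. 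Both proofs ultimately exploit the same fact, namely that only the bottom two coordinates matter; yours makes the reduction to $m=2$ explicit and correctly diagnoses why a one-shot Schwartz--Zippel in all $m$ variables would only give $O(T^{m+1}q^{m-1})$, while the paper's bookkeeping isolates the $T^3$ loss in the exceptional set and keeps the pole set and the main collision count at $O(Tq^{m-1})$ and $O(T^2q^{m-1})$, which is what feeds its closing remark on the improvement to $O(T^2q^{m-1})$ for permutation systems (an improvement your route also recovers by restricting to $s=0$). One small point to make explicit in your base case: to place a pole-hitting $\vec{u}_0$ in $\bigcup_{n\le T}\bigl(\{P_n=0\}\cup\{\mathrm{num}\,F_2^{(n)}=0\}\bigr)$ you should argue at the \emph{first} step at which the convention~\eqref{eq:Zero} is invoked, since only up to that moment are the closed formulas $u_{n,i}=F_i^{(n)}(\vec{u}_0)$, with nonvanishing (reduced) denominators, guaranteed to hold; with that remark your case analysis is complete.
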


\begin{proof} 
Let $\cU$ be the set of $\vec{u} = (u_1,\ldots, u_m) \in \F_q^m$ such that
$$
G_i(u_{i+1}, \ldots, u_{m}) = 0
$$
for some $i =1, \ldots, m-1$. 
Clearly $\# \cU = O(q^{m-1})$.

We now build a sequence of sets $\cU_k$, $k =0, 1, \ldots$,
recursively.

We put $\cU_0 = \cU$.

Assume that $\cU_0, \ldots, \cU_{k}$ are 
defined and let
$$
\cW_k =\cU_0 \cup \ldots \cup \cU_{k}.
$$
Then we  let $\cU_{k+1}$ be the set of  the initial values 
$$
\vec{u}_0\in \F_q^m \setminus \cW_k 
$$ 
such that for the corresponding sequence of vectors~\eqref{eq:Vec}
we have $\vec{u}_{k+1} \in \cU$. Inspecting~\eqref{eq:ratsyst}, 
we now see that, by our assumption, for any  
$\vec{v} \in \F_q^m$,
there is a unique preimage $\vec{u}   \in  \F_q^m \setminus  \cU$
under the map given by~\eqref{eq:ratsyst} (that is, with 
$\vec{v}=\vec{F}(\vec{u})$). 
In turn, we see that for any $\vec{v} \in \cU$ there is 
a unique corresponding 
initial value $\vec{u}_0\in \F_q^m \setminus \cW_k $ 
with  $\vec{u}_{k+1} = \vec{v}$. Thus $\# \cU_k = \# \cU$.

Since there are $O(q^{m-1})$ vectors $\vec{u}\in \F_q^m$
that contain a zero in at least one component, we see that the 
set $\cE_T$ of initial values for which, for some  integer $t \le T$,
the vector $\vec{u}_t$ has a zero component, satisfies 
\begin{equation}
\label{eq:ET}
\cE_T  = O(q^{m-1} + T \#\cU)  =  O(Tq^{m-1}).
\end{equation}

We see that if  a vector $\vec{u}_0\in \F_q^m \setminus \cE_T$
generates a trajectory of lengths $t \le T$ then
$\vec{u}_t=\vec{u}_s$ for some nonnegative integer $s<t$.

Now,  if $e_{m-1} = 1$, then  we remove the set $\cF_{T}$ of initial vectors  
$\vec{u}_0\in \F_q^m$ such that 
$$
G_{m-1,t}(\vec{u}_0) = G_{m-1,s}(\vec{u}_0)
$$
for some  integers $s$ and $t$ with
$T \ge t > s \ge 0$.
By Lemma~\ref{lem:LinTermSyst} we see that  
$G_{m-1,t}  - G_{m-1,s}$
is a nontrivial polynomial of degree $O(t)$.
Hence,
\begin{equation}
\label{eq:FT}
\# \cF_{T} = O\(\sum_{0 \le s < t \le T} t q^{m-1}\) =
O\(T^{3}q^{m-1}\).
\end{equation}

Furthermore, if $e_{m-1} = -1$, then we remove the set $\cF_{T}$ of initial vectors  
$\vec{u}_0\in \F_q^m$ such that   
$$R_{m-1,t}(\vec{u}_0)R_{m-1,s-1}(\vec{u}_0) = R_{m-1,t-1}(\vec{u}_0)R_{m-1,s}(\vec{u}_0)
$$
for some  integers $s$ and $t$ with
$T \ge t > s \ge 0$.
As in the proof of Theorem~\ref{thm:indep} 
(in particular, see~\eqref{eq:RRRR})
we note that Lemma~\ref{lem:LinTermSyst}
implies that $R_{m-1,t} R_{m-1,s-1} - R_{m-1,t-1}R_{m-1,s}$
is a nontrivial polynomial of degree $O(t)$.
Hence, again we obtain the bound~\eqref{eq:FT}.

We  remark that for $T \ge t > s \ge 0$,  
for any solution 
$\vec{u}_0=(u_{0,1},\ldots,u_{0,m})\in \F_q^m \setminus \cF_T$
to the equation
$$
\vec{F}^{(t)}(\vec{u}_{0}) =\vec{F}^{(s)}(\vec{u}_{0}),
$$
the component  $u_{0,m-1}$ is uniquely defined
by $u_{0,m}$. So there at most $q^{m-1}$ such solutions
for every fixed $t$ and $s$ with $T \ge t > s \ge 0$ 
and thus at most $T^2q^{m-1}$ for such $t$ and $s$.  
Combining this bound with~\eqref{eq:ET} and~\eqref{eq:FT}
we conclude the proof. \end{proof}

Clearly if the map $\vec{u} \mapsto \vec{F}(\vec{u})$
is a permutation, as for example, in~\cite{Ost1}, 
then all trajectories are purely periodic. So we always have
$s = 0$ in the argument of the proof of Theorem~\ref{thm:traj}.
This leads to a better estimate $O(T^2q^{m-1})$ on the number of 
initial values generating trajectories of length at most $T$.   

\section{Maximal Periods}

In this section we show that the periods of the rational function systems over $\F_q$ defined by~\eqref{eq:ratsyst} with $e_i=-1$ for all $i=1,\ldots,m$ are given by the orbit lengths of certain linear fractional transformations, also called M\"obius transformations. In particular, we describe the case when the systems~\eqref{eq:ratsyst} achieve maximal periods in their orbits. We also note that in~\cite[Theorem 6]{Ost3} there are given necessary and sufficient conditions for the system~\eqref{eq:ratsyst} to achieve maximal period in the case $e_i=1$ for all $i=1,\ldots,m$. 

We denote
$$
\widetilde{\vec{u}}_{0,i}=(u_{0,i+1},\ldots,u_{0,m})\in\F_q^{m-i}, \qquad i =1, \ldots, m-1.
$$

\begin{lemma}
\label{lem:kiter}
Let $F_1, \ldots, F_m\in \F_q[X_1,\ldots,X_m]$ be  as in~\eqref{eq:ratsyst} with $e_i=-1$ for all $i=1,\ldots,m$. Assume that the sequence generated by the lower $m-i$ rational functions $F_{i+1},\ldots,F_m$ in $\F_q^{m-i}$ is purely periodic with period $\tau_{i+1}$ for some $i=1,\ldots,m-1$. Then we have the following description for the $k\tau_{i+1}$-th iteration of $F_i$ on any initial vector $\vec{u}_0\in\F_q^{m}$:
\begin{equation}
\label{eq:iterSyst}
F_i^{(k\tau_{i+1})}(\vec{u}_0)=f_i^{(k)}(u_{0,i}),\qquad k\ge 1,
\end{equation}
where
$R_{i,\tau_{i+1}}$ and $S_{i,\tau_{i+1}}$ are defined by~\eqref{eq:RS} and $f_i$ is the M\" obius transformation in the variable $Y$,
$$
f_i(Y)=\frac{YR_{i,\tau_{i+1}}(\widetilde{\vec{u}}_{0,i})+S_{i,\tau_{i+1}}(\widetilde{\vec{u}}_{0,i})}{YR_{i,\tau_{i+1}-1}(\widetilde{\vec{u}}_{0,i})+S_{i,\tau_{i+1}-1}(\widetilde{\vec{u}}_{0,i})}.
$$
In particular, the orbit length of $F_i$ in $\vec{u}_0$ is given by the orbit length of $f_i$ in $u_{0,i}$.
\end{lemma}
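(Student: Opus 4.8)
The plan is to read off~\eqref{eq:iterSyst} from the closed form~\eqref{eq:Fk-1} of Lemma~\ref{lem:LinTermSyst}, viewed as an identity of rational functions, by specialising it at $\vec{u}_0$ and using that the bottom block of the system runs on its own. Since $F_{i+1},\ldots,F_m$ depend only on $X_{i+1},\ldots,X_m$, the last $m-i$ coordinates of the trajectory~\eqref{eq:Vec} form a closed subsystem, so $\widetilde{\vec{u}}_{n,i}=(u_{n,i+1},\ldots,u_{n,m})$ is the $n$-th iterate of $\widetilde{\vec{u}}_{0,i}$ under $(F_{i+1},\ldots,F_m)$ and hence, by hypothesis, $\widetilde{\vec{u}}_{n+\tau_{i+1},i}=\widetilde{\vec{u}}_{n,i}$ for every $n\ge0$. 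Because $F_{i+1}^{(k-1)},\ldots,F_m^{(k-1)}$ evaluated at $\widetilde{\vec{u}}_{0,i}$ return $u_{k-1,i+1},\ldots,u_{k-1,m}$, the definitions of $G_i^{(k)}$ and $H_i^{(k)}$ then give, for all $k\ge1$,
\[
G_i^{(k)}(\widetilde{\vec{u}}_{0,i})=G_i(\widetilde{\vec{u}}_{k-1,i}),\qquad H_i^{(k)}(\widetilde{\vec{u}}_{0,i})=H_i(\widetilde{\vec{u}}_{k-1,i}).
\]

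Next I would recast the recurrence~\eqref{eq:RS} in matrix form. Setting
\[
P_k=\begin{pmatrix} R_{i,k} & S_{i,k}\\ R_{i,k-1} & S_{i,k-1}\end{pmatrix},\qquad
C_k=\begin{pmatrix} H_i^{(k)} & G_i^{(k)}\\ 1 & 0\end{pmatrix},
\]
the relations~\eqref{eq:RS} together with the initial data amount to $P_k=C_kP_{k-1}$ with $P_1=C_1$, so $P_k=C_kC_{k-1}\cdots C_1$; this is the same bookkeeping that produces the $A^k$ formula in part~(4) of Lemma~\ref{lem:LinTermSyst}, and corresponds to composing the successive M\"obius steps $Y\mapsto H_i^{(k)}+G_i^{(k)}/Y$ of the orbit. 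Specialising at $\widetilde{\vec{u}}_{0,i}$, the displayed identities show that $C_k(\widetilde{\vec{u}}_{0,i})$ depends only on $\widetilde{\vec{u}}_{k-1,i}$, so the $\tau_{i+1}$-periodicity of $\{\widetilde{\vec{u}}_{n,i}\}$ forces $C_{k+\tau_{i+1}}(\widetilde{\vec{u}}_{0,i})=C_k(\widetilde{\vec{u}}_{0,i})$ for every $k\ge1$. Grouping the product $C_{k\tau_{i+1}}\cdots C_1$, evaluated at $\widetilde{\vec{u}}_{0,i}$, into $k$ consecutive blocks of length $\tau_{i+1}$, every block equals $C_{\tau_{i+1}}(\widetilde{\vec{u}}_{0,i})\cdots C_1(\widetilde{\vec{u}}_{0,i})=P_{\tau_{i+1}}(\widetilde{\vec{u}}_{0,i})=:N$, and $N$ is precisely the matrix attached to the M\"obius map $f_i$ (its first row being $(R_{i,\tau_{i+1}}(\widetilde{\vec{u}}_{0,i}),S_{i,\tau_{i+1}}(\widetilde{\vec{u}}_{0,i}))$ and its second row $(R_{i,\tau_{i+1}-1}(\widetilde{\vec{u}}_{0,i}),S_{i,\tau_{i+1}-1}(\widetilde{\vec{u}}_{0,i}))$). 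Hence $P_{k\tau_{i+1}}(\widetilde{\vec{u}}_{0,i})=N^k$.

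Finally, substituting this into~\eqref{eq:Fk-1} specialised at $\vec{u}_0$ (that is, $X_i\mapsto u_{0,i}$ and $X_j\mapsto u_{0,j}$ for $j>i$) yields
\begin{align*}
F_i^{(k\tau_{i+1})}(\vec{u}_0)
&=\frac{u_{0,i}\,R_{i,k\tau_{i+1}}(\widetilde{\vec{u}}_{0,i})+S_{i,k\tau_{i+1}}(\widetilde{\vec{u}}_{0,i})}
       {u_{0,i}\,R_{i,k\tau_{i+1}-1}(\widetilde{\vec{u}}_{0,i})+S_{i,k\tau_{i+1}-1}(\widetilde{\vec{u}}_{0,i})}\\
&=\frac{(N^k)_{1,1}\,u_{0,i}+(N^k)_{1,2}}{(N^k)_{2,1}\,u_{0,i}+(N^k)_{2,2}}=f_i^{(k)}(u_{0,i}),
\end{align*}
the last step because the $k$-th iterate of a M\"obius transformation is given by the $k$-th power of its matrix; this is~\eqref{eq:iterSyst}. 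The concluding assertion then follows at once from~\eqref{eq:iterSyst} and the pure periodicity of $\{\widetilde{\vec{u}}_{n,i}\}$: the full vector $(u_{n,i},\widetilde{\vec{u}}_{n,i})$ returns to its initial value only at multiples $n=k\tau_{i+1}$, and then exactly when $f_i^{(k)}(u_{0,i})=u_{0,i}$.

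I expect the only real step to be the block factorisation $P_{k\tau_{i+1}}(\widetilde{\vec{u}}_{0,i})=N^k$, which is short once $C_{k+\tau_{i+1}}(\widetilde{\vec{u}}_{0,i})=C_k(\widetilde{\vec{u}}_{0,i})$ has been established. The points needing a little care are the index shift (each $C_k$ carries $\widetilde{\vec{u}}_{k-1,i}$, so the $C_k$'s stay periodic with the same modulus $\tau_{i+1}$) and, as elsewhere in the paper, the harmless bookkeeping imposed by the convention $0^{-1}=0$ should a coordinate vanish along the trajectory.
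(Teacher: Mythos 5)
Your proposal is correct, but it implements the key step differently from the paper. Both arguments rest on the same two ingredients: the closed form~\eqref{eq:Fk-1} from Lemma~\ref{lem:LinTermSyst} and the fact that, the lower block being purely periodic, after every $\tau_{i+1}$ iterations the coordinates $u_{n,i+1},\ldots,u_{n,m}$ return to their initial values. The paper then argues directly at the level of values: it first checks $F_i^{(\tau_{i+1})}(\vec{u}_0)=f_i(u_{0,i})$, and then proves~\eqref{eq:iterSyst} by induction on $k$, using the composition identity $F_i^{(k\tau_{i+1})}=F_i^{(\tau_{i+1})}\bigl(F_i^{((k-1)\tau_{i+1})},F_{i+1}^{((k-1)\tau_{i+1})},\ldots,F_m^{((k-1)\tau_{i+1})}\bigr)$ together with periodicity of the lower coordinates, so that each macro-step of length $\tau_{i+1}$ applies the same M\"obius map $f_i$ to the previous value. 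You instead work with the coefficient recursion~\eqref{eq:RS}: encoding it as $P_k=C_kP_{k-1}$ with the step matrices $C_k$, observing that $C_k(\widetilde{\vec{u}}_{0,i})$ depends only on $\widetilde{\vec{u}}_{k-1,i}$ and is therefore $\tau_{i+1}$-periodic, grouping the product into identical blocks to get $P_{k\tau_{i+1}}(\widetilde{\vec{u}}_{0,i})=N^k$ with $N=P_{\tau_{i+1}}(\widetilde{\vec{u}}_{0,i})$, and then invoking the correspondence between M\"obius iteration and matrix powers. Your route makes the transfer-matrix structure explicit (and would also hand you determinant identities of the type used in Lemma~\ref{lem:prod} for free), at the cost of relying on the M\"obius--matrix correspondence, which is slightly delicate exactly where a denominator vanishes and the convention~\eqref{eq:Zero} is invoked; the paper's value-by-value induction is shorter and stays closer to the actual trajectory, though it glosses over the same degenerate cases. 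Your treatment of the final assertion (returns of the full subsystem vector occur only at multiples of $\tau_{i+1}$, and then precisely when $f_i^{(k)}(u_{0,i})=u_{0,i}$) matches the paper's brief remark.
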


\begin{proof}
We first note that the orbit length of $F_i$ is a multiple of $\tau_{i+1}$. Indeed, let $\tau_i$ be the orbit length of the system $F_i,\ldots,F_m$ in the initial vector $\vec{u}_0$. Then $\tau_i=\lcm(\tau_{i+1},\eta_i)$, where $\eta_i$ is the period ofthe sequence $\{u_{n,i}\}$ defined by the iterations of the polynomial $F_i$, and thus $\tau_i$ is a multiple of $\tau_{i+1}$. This shows that, in order to describe the period of $F_i,\ldots,F_m$ on the initial vector $\vec{u}_0$, it is enough to consider only $k\tau_{i+1}$-th iterations of $F_i$. 

By~\eqref{eq:Fk-1} we have
\begin{equation}
\label{eq:tau}
F_i^{(\tau_{i+1})}(\vec{u}_0) = \frac{u_{0,i}R_{i,\tau_{i+1}}(\widetilde{\vec{u}}_{0,i})+S_{i,\tau_{i+1}}(\widetilde{\vec{u}}_{0,i})}{u_{0,i}R_{i,\tau_{i+1}-1}(\widetilde{\vec{u}}_{0,i}) +S_{i,\tau_{i+1}-1}(\widetilde{\vec{u}}_{0,i})}=f_i(u_{0,i}).\end{equation}
Now, 
\begin{equation}
\label{eq:ktau}
\begin{split}
F_i^{(k\tau_{i+1})}&(\vec{u}_0)\\
 =&F_i^{(\tau_{i+1})}\(F_i^{((k-1)\tau_{i+1})}(\vec{u}_0),F_{i+1}^{((k-1)\tau_{i+1})}(\vec{u}_0),\ldots,F_m^{((k-1)\tau_{i+1})}(\vec{u}_0)\)\\
 =& F_i^{(\tau_{i+1})}\(F_i^{((k-1)\tau_{i+1})}(\vec{u}_0),u_{0,i+1},\ldots,u_{0,m}\)\\
 =& \frac{F_i^{((k-1)\tau_{i+1})}(\vec{u}_0)R_{i,\tau_{i+1}}(\widetilde{\vec{u}}_{0,i})+S_{i,\tau_{i+1}}(\widetilde{\vec{u}}_{0,i})}{F_i^{((k-1)\tau_{i+1})}(\vec{u}_0)R_{i,\tau_{i+1}-1}(\widetilde{\vec{u}}_{0,i}) +S_{i,\tau_{i+1}-1}(\widetilde{\vec{u}}_{0,i})}.
\end{split}
\end{equation}
To prove~\eqref{eq:iterSyst} we use induction over $k$. 
For $k=1$ it is clear. 

We now assume that the statement is 
true for $k-1$ and we prove it for $k$. 
Using~\eqref{eq:tau},~\eqref{eq:ktau} and the induction hypothesis, we derive 
\begin{equation*}
\begin{split}
F_i^{(k\tau_{i+1})}(\vec{u}_0)
&=\frac{f_i^{(k-1)}(u_{0,i})R_{i,\tau_{i+1}}(\widetilde{\vec{u}}_{0,i})+S_{i,\tau_{i+1}}(\widetilde{\vec{u}}_{0,i})}{f_i^{(k-1)}(u_{0,i})R_{i,\tau_{i+1}-1}(\widetilde{\vec{u}}_{0,i}) +S_{i,\tau_{i+1}-1}(\widetilde{\vec{u}}_{0,i})}=f_i^{(k)}(u_{0,i}),
\end{split}
\end{equation*}
which concludes  the proof.
\end{proof}

\begin{lemma}
\label{lem:prod}
Let $\cF=\{F_1,\ldots,F_m\}$ be a system of 
polynomials over $\F_q$ defined 
by~\eqref{eq:ratsyst}. Let $i=1,\ldots,m$ such that $e_i=-1$ in the system ~\eqref{eq:ratsyst}. Then we have
$$
R_{i,k}S_{i,k-1}-R_{i,k-1}S_{i,k}=(-1)^kG_iG_i^{(2)}\ldots G_i^{(k)},
$$
where $R_{i,k},S_{i,k}$ are defined by~\eqref{eq:RS}.
\end{lemma}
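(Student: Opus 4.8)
The plan is to prove the identity $R_{i,k}S_{i,k-1}-R_{i,k-1}S_{i,k}=(-1)^kG_iG_i^{(2)}\ldots G_i^{(k)}$ by induction on $k$, exploiting the linear two-term recurrence~\eqref{eq:RS} satisfied by both $R_{i,k}$ and $S_{i,k}$. This is a standard Casoratian/Wronskian-type determinant identity for solutions of a second-order linear recurrence, so the argument should be short.

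First I would verify the base case. From the initial data $R_{i,0}=1$, $S_{i,0}=0$, $R_{i,1}=H_i$, $S_{i,1}=G_i$ we get $R_{i,1}S_{i,0}-R_{i,0}S_{i,1}=H_i\cdot 0-1\cdot G_i=-G_i$, which is $(-1)^1 G_i$, matching the claimed formula for $k=1$. (If one wants a $k=0$ case it reads $R_{i,0}S_{i,-1}-R_{i,-1}S_{i,0}$, which is not needed; starting the induction at $k=1$ is cleanest.)

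For the inductive step, assume the identity holds for $k-1$, i.e. $R_{i,k-1}S_{i,k-2}-R_{i,k-2}S_{i,k-1}=(-1)^{k-1}G_iG_i^{(2)}\ldots G_i^{(k-1)}$. Then substitute the recurrences $R_{i,k}=G_i^{(k)}R_{i,k-2}+H_i^{(k)}R_{i,k-1}$ and $S_{i,k}=G_i^{(k)}S_{i,k-2}+H_i^{(k)}S_{i,k-1}$ into $R_{i,k}S_{i,k-1}-R_{i,k-1}S_{i,k}$. Expanding, the two terms containing $H_i^{(k)}$ cancel, and what remains is
$$
G_i^{(k)}\bigl(R_{i,k-2}S_{i,k-1}-R_{i,k-1}S_{i,k-2}\bigr) = -G_i^{(k)}\bigl(R_{i,k-1}S_{i,k-2}-R_{i,k-2}S_{i,k-1}\bigr).
$$
Applying the induction hypothesis, this equals $-G_i^{(k)}\cdot(-1)^{k-1}G_iG_i^{(2)}\ldots G_i^{(k-1)}=(-1)^kG_iG_i^{(2)}\ldots G_i^{(k)}$, completing the induction.

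I do not anticipate a genuine obstacle here; the only thing to watch is bookkeeping of signs and correctly pairing the cross terms so that the $H_i^{(k)}$-contributions cancel while the $G_i^{(k)}$-contribution reproduces the lower-index Casoratian. One should also note that all computations take place in the field $\F(X_{i+1},\ldots,X_m)$ (with the convention~\eqref{eq:Zero} where relevant), but since these are formal identities in the rational function ring no subtlety arises. The identity will presumably be used afterwards to control the degree or nonvanishing of the denominators/numerators appearing in~\eqref{eq:Fk-1} and in Lemma~\ref{lem:kiter}, e.g. to show the relevant M\"obius transformations are nondegenerate whenever the $G_i^{(j)}$ are nonzero.
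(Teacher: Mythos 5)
Your proof is correct and matches the paper's argument essentially verbatim: both verify the base case $k=1$ from the initial data and then perform the same Casoratian-style induction in which the $H_i$-terms cancel and a factor $-G_i^{(k)}$ times the lower-index determinant remains. No issues.
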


\begin{proof}
We use induction on $k$. If $k=1$, by~\eqref{eq:RS}, we get
$$
R_{i,1}S_{i,0}-R_{i,0}S_{i,1}=-G_i.
$$
We assume the statement true for $k$ and we prove it for $k+1$. By~\eqref{eq:RS} and the induction hypothesis we have
\begin{equation*}
\begin{split}
R_{i,k+1}S_{i,k}&-R_{i,k}S_{i,k+1}\\
 =&(G_i^{(k+1)}R_{i,k-1}+H_i^{(k+1)}R_{i,k})S_{i,k}-R_{i,k}(G_i^{(k+1)}S_{i,k-1}+H_i^{(k+1)}S_{i,k})\\
 =&-G_i^{(k+1)}(R_{i,k}S_{i,k-1}-R_{i,k-1}S_{i,k})\\
 =&(-1)^{k+1}G_iG_i^{(2)}\ldots G_i^{(k)}G_i^{(k+1)}
\end{split}
\end{equation*}
and thus we conclude the proof.
\end{proof}

As usual, we say that a polynomial $f\in\F_q[X]$ of degree $d\ge 1$ is   {\it primitive\/}  if it is the minimal polynomial over $\F_q$ of a primitive element of $\F_{q^d}$ (that is, an element of 
multiplicative order $q^d-1$), see~\cite{LN}.

Next, we present necessary and sufficient conditions for the system~\eqref{eq:ratsyst} to achieve maximal period over the prime field $\F_p$. 

Using~\cite[Lemma 2]{Ost3} which holds for the functions $F_i$ in the system~\eqref{eq:ratsyst} for which $e_i=1$, we have the following analogue of~\cite[Lemma 5]{Ost3}
(with an almost identical proof which we do not present here). 

\begin{lemma}
\label{lem:iterH}
Let $\cF=\{F_1,\ldots,F_m\}$ be a system of 
polynomials over $\F_p$ defined 
by~\eqref{eq:ratsyst}. Let the index $1\le i\le m$ such that $e_i=1$ and assume that the period of the sequence generated by the lower $m-i$ polynomials $F_{i+1},\ldots,F_m$ in $\F_p^{m-i}$ is $p^{m-i}$ and that $G_{i,p^{m-i}}(\widetilde{\vec{u}}_{0,i})=1$. Then, for 
the rational functions $H_{i,p^{m-i}}$ defined by~\eqref{eq:GH}, we have
$$
H_{i,p^{m-i}}(\widetilde{\vec{u}}_{0,i})=\sum_{\vec{v}\in\F_p^{m-i}}R_i(\vec{v}),
$$
where
\begin{equation}
\label{eq:R_i}
R_i\equiv H_iG_i^{(2)}\ldots G_i^{(p^{m-i})}.\end{equation}
\end{lemma}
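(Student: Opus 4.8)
The plan is to read the value of $H_{i,p^{m-i}}$ off the explicit description in Lemma~\ref{lem:LinTermSyst} (the case $e_i=1$, $i<m$) and then to exploit the hypothesis that the lower block $F_{i+1},\ldots,F_m$ runs through one full cycle on $\F_p^{m-i}$. Put $N=p^{m-i}$ and $\vec v_0=\widetilde{\vec u}_{0,i}$. Since the lower subsystem has period exactly $N$ on $\vec v_0$ and $\#\F_p^{m-i}=N$, it induces a permutation $\sigma$ of $\F_p^{m-i}$ which is a single $N$-cycle, the trajectory of $\vec v_0$ is purely periodic, and the points $\vec v_\ell:=\sigma^\ell(\vec v_0)$, $\ell=0,\ldots,N-1$, run exactly once through $\F_p^{m-i}$. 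Straight from the definitions of $G_i^{(\ell)}$ and $H_i^{(\ell)}$ one gets, for all $\ell\ge1$ and all $j\ge0$,
\[
G_i^{(\ell)}(\vec v_j)=G_i(\vec v_{j+\ell-1}),\qquad H_i^{(\ell)}(\vec v_j)=H_i(\vec v_{j+\ell-1}),
\]
with indices read modulo $N$.

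First I would feed these into the formula for $H_{i,N}$ from~\eqref{eq:GH}, which can be written compactly as $H_{i,N}=\sum_{j=1}^{N}H_i^{(j)}G_i^{(j+1)}\cdots G_i^{(N)}$ (with $H_i^{(1)}=H_i$ and the empty product for $j=N$). Writing $G_i^{(j+1)}\cdots G_i^{(N)}=G_{i,N}/G_{i,j}$ and invoking the hypothesis $G_{i,N}(\vec v_0)=1$ — which, because $\sigma$ is an $N$-cycle, is equivalent to $G_{i,N}\equiv1$ on the whole of $\F_p^{m-i}$ and in particular makes $G_i$ nowhere zero there — one obtains
\[
H_{i,N}(\vec v_0)=\sum_{j=1}^{N}\frac{H_i(\vec v_{j-1})}{G_{i,j}(\vec v_0)},\qquad G_{i,j}(\vec v_0)=\prod_{r=0}^{j-1}G_i(\vec v_r).
\]
The same normalisation gives $G_i^{(2)}\cdots G_i^{(N)}\equiv1/G_i$ on $\F_p^{m-i}$, hence $R_i(\vec v)=H_i(\vec v)/G_i(\vec v)$ for every $\vec v\in\F_p^{m-i}$, so that $\sum_{\vec v\in\F_p^{m-i}}R_i(\vec v)=\sum_{\ell=0}^{N-1}H_i(\vec v_\ell)/G_i(\vec v_\ell)$. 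Thus the lemma becomes equivalent to the identity
\[
\sum_{j=1}^{N}\frac{H_i(\vec v_{j-1})}{G_{i,j}(\vec v_0)}=\sum_{\ell=0}^{N-1}\frac{H_i(\vec v_\ell)}{G_i(\vec v_\ell)}.
\]

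The step I expect to be the main obstacle is this last identity: on the left the running products $G_{i,j}(\vec v_0)$ depend on the entire past of the trajectory, whereas on the right only the single-step factor $G_i(\vec v_\ell)$ appears, so a genuine rearrangement is needed. This is precisely the manipulation carried out in the proof of~\cite[Lemma~5]{Ost3}: using the cocycle relation $G_{i,j}(\vec v_0)=G_{i,j-1}(\vec v_0)\,G_i(\vec v_{j-1})$ one shifts the base point of the iteration from $\vec v_0$ to the successive $\vec v_{j-1}$ — legitimate exactly because the trajectory fills $\F_p^{m-i}$ — and regroups the sum so that the running products telescope, while~\cite[Lemma~2]{Ost3} ensures that after reduction modulo $X_{i+1}^p-X_{i+1},\ldots,X_m^p-X_m$ one recovers the values of the single fixed rational function $R_i$. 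I would organise this bookkeeping by an induction paralleling those in Lemma~\ref{lem:LinTermSyst} and in~\cite[Lemma~1]{OstShp1}, with the degenerate case $i=m$ immediate since then $\F_p^{m-i}$ is a single point and both sides reduce to $h_m$.
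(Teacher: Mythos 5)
Your bookkeeping up to the reduction is correct as far as it goes: with $N=p^{m-i}$ the hypothesis does force the lower block to act as a single $N$-cycle covering $\F_p^{m-i}$, so $G_i^{(\ell)}(\vec{v}_j)=G_i(\vec{v}_{j+\ell-1})$, the condition $G_{i,N}(\widetilde{\vec{u}}_{0,i})=1$ becomes $\prod_{\vec{v}}G_i(\vec{v})=1$, hence $G_i$ is nonvanishing on $\F_p^{m-i}$, $R_i(\vec{v})=H_i(\vec{v})/G_i(\vec{v})$ at every point, and $H_{i,N}(\vec{v}_0)=\sum_{j=1}^{N}H_i(\vec{v}_{j-1})/G_{i,j}(\vec{v}_0)$. (Note the paper offers no proof to compare against: it only refers to~\cite{Ost3} as ``almost identical'', so your attempt must stand on its own.) The difficulty is that your proof stops exactly where the whole content of the lemma lies: the equality
$$
\sum_{j=1}^{N}\frac{H_i(\vec{v}_{j-1})}{G_{i,j}(\vec{v}_0)}=\sum_{\ell=0}^{N-1}\frac{H_i(\vec{v}_\ell)}{G_i(\vec{v}_\ell)}
$$
is not proved but only attributed to a ``base-point shift and telescoping'' in~\cite{Ost3}; that is a description of a hoped-for manipulation, not an argument.

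Worse, no such rearrangement can close the gap under the stated hypotheses: once $\prod_{\vec{v}}G_i(\vec{v})=1$ one checks from~\eqref{eq:GH} that $H_{i,N}(\vec{v}_{s+1})=G_i(\vec{v}_s)\,H_{i,N}(\vec{v}_s)$, so the left-hand side genuinely depends on the base point while the right-hand side does not. Concretely, take $p=3$, $m=2$, $i=1$, $F_2=X_2+1$, $G_1=X_2^2+1$ (values $1,2,2$ on $\F_3$) and $H_1=1$; then~\eqref{eq:deg1} holds, $s_{1,2}=2\ne 0$, the lower sequence has period $3$, and $\prod_{v\in\F_3}G_1(v)=1$, yet $H_{1,3}(0)=G_1(1)G_1(2)+G_1(2)+1\equiv 1\pmod 3$ while $\sum_{v\in\F_3}R_1(v)=1+2+2\equiv 2\pmod 3$. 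So the identity to which you (correctly) reduced the lemma is false with the definitions of $G_i^{(\ell)}$, $H_{i,k}$ and $R_i$ as given in this paper; whatever makes the cited argument of~\cite{Ost3} work must rest on conventions or hypotheses not reproduced here, and nothing in your write-up supplies that missing ingredient. As written, the decisive step is a genuine gap, and the advertised telescoping would fail rather than fix it.
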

 
Now, using Lemma~\ref{lem:iterH} and~\cite[Theorem 6]{Ost3} for $F_i$ with $e_i=1$ in the system~\eqref{eq:ratsyst}, we have the following result.
 
We recall that the sets $I_+$ and $I_-$ are given by~\eqref{eq:I+-}.  
 
\begin{theorem}
Let $\cF=\{F_1,\ldots,F_m\}$ be a system of 
polynomials over $\F_p$ defined 
by~\eqref{eq:ratsyst}.
Then  the sequence $\{\vec{u}_n\}$ generated by~\eqref{eq:Vec} is purely periodic
with period $\tau=p^m$ if and only if the following 
conditions are satisfied
\begin{enumerate}
\item for every $i\in I_+$, $i < m$, we have
$$
\prod_{\vec{v}\in\F_p^{m-i}}G_i(\vec{v})=1 \quad \text{and} \quad \sum_{\vec{v}\in\F_p^{m-i}}R_i(\vec{v})\ne 0;
$$
\item for every $i\in I_-$, $i < m$, we have:
\begin{enumerate}
\item if $R_{i,p^{m-i}-1}({\vec{u}}_{0})=0$, then
$$
R_{i,p^{m-i}}({\vec{u}}_{0})=S_{i,p^{m-i}-1}({\vec{u}}_{0}) \quad 
\text{and} \quad 
S_{i,p^{m-i}}({\vec{u}}_{0})S_{i,p^{m-i}-1}({\vec{u}}_{0})\ne 0;
$$
\item 
if 
$R_{i,p^{m-i}-1}({\vec{u}}_{0})\ne 0$,  then
$$
X^2-\frac{R_{i,p^{m-i}}({\vec{u}}_{0})}{R_{i,p^{m-i}-1}({\vec{u}}_{0})}X-\frac{\prod_{\vec{v}\in\F_p^{m-i}}G_1(\vec{v})}{R_{i,p^{m-i}-1}({\vec{u}}_{0})}
$$
is a primitive polynomial over $\F_p$;
\end{enumerate}

\item if $m \in I_+$, then $ g_m=1$;
\item if $m \in I_-$, then $X^2-h_mX-g_m$
is a primitive polynomial over $\F_p$. 
\end{enumerate}
\end{theorem}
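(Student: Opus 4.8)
The plan is to proceed by the standard ``bottom-up'' induction on the index $i$ running from $m$ down to $1$, reducing the period of the whole system to a product (in fact an lcm, which under the maximality hypotheses becomes a clean product) of the periods contributed by each coordinate. The key structural fact is the one already recorded in Lemma~\ref{lem:kiter}: if the lower block $F_{i+1},\ldots,F_m$ is purely periodic with period $\tau_{i+1}$ on a given initial vector, then the $k\tau_{i+1}$-th iterates of $F_i$ are exactly the iterates $f_i^{(k)}(u_{0,i})$ of a single M\"obius transformation $f_i$ whose matrix is
$$
M_i=\begin{pmatrix} R_{i,\tau_{i+1}}(\widetilde{\vec u}_{0,i}) & S_{i,\tau_{i+1}}(\widetilde{\vec u}_{0,i})\\ R_{i,\tau_{i+1}-1}(\widetilde{\vec u}_{0,i}) & S_{i,\tau_{i+1}-1}(\widetilde{\vec u}_{0,i})\end{pmatrix}.
$$
Hence $\tau_i=\tau_{i+1}\cdot(\text{orbit length of }f_i\text{ on }u_{0,i})$, and the whole problem becomes: when does $f_i$ have orbit length exactly $p^{m-i}/\tau_{i+1}$, i.e. when is the total contribution of coordinate $i$ equal to $p$?

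First I would handle the base of the induction, the single coordinate $F_m$. If $m\in I_+$ then $F_m=g_mX_m+h_m$ is affine and classical: it has period $p$ on $\F_p$ iff $g_m=1$ (and then any $h_m\ne 0$ works, but here $h_m$ is part of the data and $g_m=1$ forces period $p$ since $h_m$ may be anything — actually one checks period is $p$ iff $g_m=1$, which is condition (3)). If $m\in I_-$ then $F_m$ corresponds to the M\"obius transformation with matrix $A=\begin{pmatrix}h_m & g_m\\ 1 & 0\end{pmatrix}$, and the orbit of $F_m$ on $\F_p\cup\{\infty\}$ (recall the convention $0^{-1}=0$ effectively compactifies) has length $p+1$ — the maximal possible for a M\"obius map acting on the projective line — precisely when $A$ has order $p+1$ in $\mathrm{PGL}_2(\F_p)$, which is exactly the condition that its characteristic polynomial $X^2-h_mX-g_m$ be irreducible with a root of multiplicative order $q^2-1=p^2-1$ in $\F_{p^2}$, i.e. primitive. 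But one must be careful: the maximal orbit on $\F_p$ itself (including the ``point at infinity'' only through the convention) needs a short separate check that the orbit genuinely visits all $p+1$ points and that, translated back through~\eqref{eq:Zero}, this is the same as the period of the original sequence $\{u_{n,m}\}$ being $p$. I expect this reconciliation of the $0^{-1}=0$ convention with the projective picture to be one of the more delicate bookkeeping points.

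Next, the inductive step for a fixed $i<m$, assuming $\tau_{i+1}=p^{m-i-1}$ (all lower coordinates already maximal). For $i\in I_+$ one uses Lemma~\ref{lem:iterH}: iterating $F_i$ over one full lower-period, the multiplier accumulates to $G_{i,p^{m-i}}(\widetilde{\vec u}_{0,i})=\prod_{\vec v\in\F_p^{m-i}}G_i(\vec v)$ and the additive part to $H_{i,p^{m-i}}(\widetilde{\vec u}_{0,i})=\sum_{\vec v\in\F_p^{m-i}}R_i(\vec v)$ with $R_i$ as in~\eqref{eq:R_i}; so $f_i$ is affine, and it has period $p$ iff its multiplier is $1$ and its additive constant is nonzero, which is exactly condition (1). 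For $i\in I_-$, $f_i$ is a genuine M\"obius map with matrix $M_i$ as above; by Lemma~\ref{lem:prod} its determinant is $R_{i,\tau_{i+1}}S_{i,\tau_{i+1}-1}-R_{i,\tau_{i+1}-1}S_{i,\tau_{i+1}}=(-1)^{\tau_{i+1}}\prod_{\vec v}G_i(\vec v)$ up to the convention, so the characteristic polynomial of $M_i$ is $X^2-\frac{R_{i,p^{m-i}}}{R_{i,p^{m-i}-1}}X-\frac{\prod_{\vec v}G_i(\vec v)}{R_{i,p^{m-i}-1}}$ when $R_{i,p^{m-i}-1}\ne 0$; the orbit length of $f_i$ on $\F_p$ is $p$ (the max) iff this polynomial is primitive, giving case (2)(b). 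The degenerate case $R_{i,p^{m-i}-1}(\vec u_0)=0$ makes $M_i$ lower-triangular (parabolic-type), the M\"obius map becomes affine in disguise, and one gets period $p$ iff the two leftover conditions in (2)(a) hold — this reduces to the affine analysis again.

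Finally I would assemble: the sequence $\{\vec u_n\}$ is purely periodic with period $p^m$ iff for every initial vector each coordinate contributes its maximal factor $p$, which by the induction is equivalent to the conjunction of conditions (1)–(4). For the ``only if'' direction one runs the same chain in reverse: if some condition fails at index $i$, then $f_i$ has orbit length a proper divisor of $p$ (hence $1$) or, in the M\"obius case, the relevant matrix has order in $\mathrm{PGL}_2(\F_p)$ strictly less than $p$ or $p+1$, forcing $\tau_i<p^{m-i}$, hence $\tau<p^m$. The main obstacle throughout is \emph{not} the algebra — Lemmas~\ref{lem:kiter}, \ref{lem:prod} and~\ref{lem:iterH} do all the heavy lifting — but rather making uniform over \emph{all} initial vectors the passage between ``orbit length of a M\"obius transformation on the affine line $\F_p$ under the convention $0^{-1}=0$'' and ``order of the corresponding $2\times 2$ matrix in $\mathrm{PGL}_2(\F_p)$'', including correctly treating the parabolic/degenerate subcases and verifying that no pole is ever hit along a full period; this is exactly where I would spend the most care, and it is presumably where the cited proof of~\cite[Theorem~6]{Ost3} is invoked almost verbatim.
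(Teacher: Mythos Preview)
Your proposal is correct and follows essentially the same architecture as the paper's proof: induction on $m$, reduction of the $i$-th coordinate via Lemma~\ref{lem:kiter} to a single one-variable M\"obius (or affine) map $f_i$, a case split on whether $R_{i,p^{m-i}-1}(\widetilde{\vec u}_{0,i})$ vanishes, use of Lemma~\ref{lem:prod} to identify the determinant with $\prod_{\vec v}G_i(\vec v)$, and then the primitivity criterion of~\cite{FN} for the inversive case together with the standard affine criterion for the $I_+$ case (where the paper simply cites~\cite{Ost3}). The only substantive difference is packaging: where you invoke the characteristic polynomial of $M_i$ and the $\mathrm{PGL}_2(\F_p)$ order directly, the paper instead performs the explicit affine change of variable $Y\mapsto R_{i,p^{m-i}-1}^{-1}Y-R_{i,p^{m-i}-1}^{-1}S_{i,p^{m-i}-1}$ to bring $f_i$ into the standard inversive form $Y\mapsto cY^{-1}+d$ and then reads off the polynomial of~\cite[Theorem~1]{FN}; note that the displayed polynomial in condition~(2)(b) is the characteristic polynomial of this \emph{conjugated} matrix (with trace $R_{i,p^{m-i}}/R_{i,p^{m-i}-1}$), not literally of $M_i$ itself (whose trace is $R_{i,p^{m-i}}+S_{i,p^{m-i}-1}$), so your phrase ``the characteristic polynomial of $M_i$ is \ldots'' needs exactly that conjugation step to be made precise.
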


\begin{proof}
We prove the result by induction on $m$. 
For $m=1$, if $m\in I_+$, then it's clear that the period $p$ is achieved if and only if $g_m=1$ and $h_m\in\F_p^*$. If $m\in I_-$, we have 
$$
F_1=g_1X_1^{-1}+h_1,
$$
which, by~\cite[Theorem~1]{FN}, has maximal period $p$ if and only if the polynomial $X^2-h_1X-g_1$ is a primitive polynomial over $\F_p$.

We assume the statement true for the first $m-1$ variables and we want to prove it for $m$. 
Let  the sequence $\{\widetilde{\vec{u}}_{n,1}\}=\{(u_{n,2},\ldots,u_{n,m})\}$ 
be defined by the last $m-1$ components of the vectors in the sequence $\{\vec{u}_n\}$.
By the induction hypothesis we know that the period $\widetilde{\tau}$ of the sequence 
$\{\widetilde{\vec{u}}_{n,1}\}$ is $p^{m-1}$, and taking into account the first remark 
in the proof of Lemma~\ref{lem:kiter}, we see that the period of $\{\widetilde{\vec{u}}_{n,1}\}$ 
is of the form $k p^{m-1}$, for some $1\le k\le q$. 
Thus, proving the maximality of the  period of 
$\{\vec{u}_n\}$ reduces to proving that $k=q$. 
 We note that the representation of $F_1^{(k)}$ given by Lemma~\ref{lem:LinTermSyst} does not depend how we choose $e_{2},\ldots,e_m$ in the functions $F_{2},\ldots,F_m$, but only the functions $G_{1,k},H_{1,k}$ or $G_1^{(k)},H_1^{(k)}$  if $1\in I_+$ or $1\in I_-$, respectively. 
 Thus, the representation of $F_1^{(k)}$ given by Lemma~\ref{lem:LinTermSyst} is the same regardless if $i_j\in I_+$ or $i_j\in I_-$ for $2\le j\le m$.
 
Thus, the case $1\in I_+$ follows identically as in the proof of~\cite[Theorem]{Ost3}, and we do not repeat it here.

We consider now the case $1\in I_-$. 
 By Lemma~\ref{lem:kiter} we have
$$
F_1^{(kp^{m-1})}(\vec{u}_0)=f_1^{(k)}(u_{0,1}),\quad k\ge 1,
$$
where
$$
f_1(Y)=\frac{YR_{1,p^{m-1}}(\widetilde{\vec{u}}_{0,1})+S_{1,p^{m-1}}(\widetilde{\vec{u}}_{0,1})}{YR_{1,p^{m-1}-1}(\widetilde{\vec{u}}_{0,1})+S_{1,p^{m-1}-1}(\widetilde{\vec{u}}_{0,1})},
$$
and thus the maximal period of the sequence generated by the iterations of $F_1$ is given by the case when $f_1$ achieves maximal orbit length in $u_{0,1}$. 

We distinguish now two cases. If $R_{1,p^{m-1}-1}(\widetilde{\vec{u}}_{0,1})=0$, then we note that $S_{1,p^{m-1}-1}(\widetilde{\vec{u}}_{0,1})\ne 0$, as otherwise $f_1=0$. Thus, we have the case of linear generator
$$
f_1(Y)=\frac{R_{1,p^{m-1}}(\widetilde{\vec{u}}_{0,1})}{S_{1,p^{m-1}-1}(\widetilde{\vec{u}}_{0,1})}Y+\frac{S_{1,p^{m-1}}(\widetilde{\vec{u}}_{0,1})}{S_{1,p^{m-1}-1}(\widetilde{\vec{u}}_{0,1})}.
$$
The maximal period is achieved in this case if and only if
$$
\frac{R_{1,p^{m-1}}(\widetilde{\vec{u}}_{0,1})}{S_{1,p^{m-1}-1}(\widetilde{\vec{u}}_{0,1})}=1,\quad \frac{S_{1,p^{m-1}}(\widetilde{\vec{u}}_{0,1})}{S_{1,p^{m-1}-1}(\widetilde{\vec{u}}_{0,1})}\ne 0,
$$ 
which is equivalent to
$$
R_{1,p^{m-1}}(\widetilde{\vec{u}}_{0,1})=S_{1,p^{m-1}-1}(\widetilde{\vec{u}}_{0,1}),\quad S_{1,p^{m-1}}(\widetilde{\vec{u}}_{0,1})\ne 0.
$$

We consider now the case of $R_{1,p^{m-1}-1}(\widetilde{\vec{u}}_{0,1})\ne 0$. We note that in this case $f_1$ achieves maximal period if and only if, under a linear transformation, it has the same property. Taking into account that $R_{1,p^{m-1}-1}(\widetilde{\vec{u}}_{0,1})\ne 0$, we can make the linear transformation 
$$Y\to R_{1,p^{m-1}-1}(\widetilde{\vec{u}}_{0,1})^{-1}Y-R_{1,p^{m-1}-1}(\widetilde{\vec{u}}_{0,1})^{-1}S_{1,p^{m-1}-1}(\widetilde{\vec{u}}_{0,1})$$ 
and obtain the following inversive generator which, by a slightly abuse of notation, we denote also by $f_1$,
\begin{equation*}
\begin{split}
f_1(Y)=\frac{-R_{1,p^{m-1}}(\widetilde{\vec{u}}_{0,1})S_{1,p^{m-1}-1}(\widetilde{\vec{u}}_{0,1})+R_{1,p^{m-1}-1}(\widetilde{\vec{u}}_{0,1})S_{1,p^{m-1}}(\widetilde{\vec{u}}_{0,1})}{R_{1,^{m-1}}(\widetilde{\vec{u}}_{0,1})}Y^{-1}\quad&\\
    +\frac{R_{1,p^{m-1}}(\widetilde{\vec{u}}_{0,1})}{R_{1,p^{m-1}-1}(\widetilde{\vec{u}}_{0,1})}&.
\end{split}
\end{equation*}

Applying now Lemma~\ref{lem:prod} we have
\begin{equation*}
\begin{split}
R_{1,p^{m-1}}(\widetilde{\vec{u}}_{0,1})S_{1,p^{m-1}-1}(\widetilde{\vec{u}}_{0,1})-&R_{1,p^{m-1}-1}(\widetilde{\vec{u}}_{0,1})S_{1,p^{m-1}}(\widetilde{\vec{u}}_{0,1})\\
& =-G_1(\widetilde{\vec{u}}_{0,1})G_1^{(2)}(\widetilde{\vec{u}}_{0,1})\ldots G_1^{(p^{m-1})}(\widetilde{\vec{u}}_{0,1}).
\end{split}
\end{equation*}

Let $\widetilde{\cF}=\{F_{2},\ldots,F_m\}$. Now, as the period induced by $\widetilde{\cF}$ is $p^{m-1}$, the elements
$$
\widetilde{\vec{u}}_{0,1},\widetilde{\cF}(\widetilde{\vec{u}}_{0,1}),\ldots,\widetilde{\cF}^{(p^{m-1}-1)}(\widetilde{\vec{u}}_{0,1})
$$
are all the distinct elements of $\F_p^{m-i}$, and thus we obtain
\begin{equation*}
\begin{split}
G_{1}(\widetilde{\vec{u}}_{0,1})G_{1}^{(2)}&(\widetilde{\vec{u}}_{0,1})\ldots G_{1}^{(p^{m-1})}(\widetilde{\vec{u}}_{0,1})\\
&=G_{1}(\widetilde{\vec{u}}_{0,1})G_{1}(\widetilde{\cF}(\widetilde{\vec{u}}_{0,1}))\ldots G_{1}(\widetilde{\cF}^{(p^{m-1}-1)}(\widetilde{\vec{u}}_{0,1}))=\prod_{\vec{v}\in\F_p^{m-1}}G_1(\vec{v}).
\end{split}
\end{equation*}
This concludes that
$$
f_1(Y)=\frac{\prod_{\vec{v}\in\F_p^{m-1}}G_1(\vec{v})}{R_{1,p^{m-1}}(\widetilde{\vec{u}}_{0,1})}Y^{-1}+\frac{R_{1,p^{m-1}}(\widetilde{\vec{u}}_{0,1})}{R_{1,p^{m-1}-1}(\widetilde{\vec{u}}_{0,1})}.
$$
Applying now~\cite[Theorem~1]{FN}, we know that $f_1$ achieves maximal period $q$ if and only if the polynomial 
$$X^2-\frac{R_{1,p^{m-1}}(\widetilde{\vec{u}}_{0,1})}{R_{1,p^{m-1}-1}(\widetilde{\vec{u}}_{0,1})}X-\frac{\prod_{\vec{v}\in\F_p^{m-1}}G_1(\vec{v})}{R_{1,p^{m-1}-1}(\widetilde{\vec{u}}_{0,1})}$$ 
is a primitive polynomial over $\F_p$.
\end{proof}
 
\section*{Acknowledgment}

During the preparation of this paper,  
A.~O. was supported in part by 
the Swiss National Science Foundation   Grant~PBZHP2--133399 
and I.~S. by
the  Australian Research Council 
Grant~DP1092835.


\begin{thebibliography}{99}


\bibitem{AnKhr} V. Anashin and A. Khrennikov, 
`Applied algebraic dynamics', Walter de Gruyter, Berlin. 2009.

\bibitem{BeTr} E. Bedford and T. T. Truong,  
`Degree complexity of birational maps related to matrix inversion',
{\it Comm. Math. Phys.\/}, {\bf 298} (2010), 357--368. 

 \bibitem{FN} M. Flahive and H. Niederreiter, `On inversive congruential generators for pseudorandom numbers', {\it Finite Fields, Coding Theory, and Advances in Commun. and Comp.\/}, Marcel Dekker, New York (1992), 75--80.

\bibitem{GNS} F.  Griffin, H. Niederreiter and  I.~E.~Shparlinski,
`On the  distribution
of nonlinear recursive congruential pseudorandom  numbers of higher orders',
{\it Lect. Notes in Comp. Sci.\/}, Springer-Verlag,  
Berlin, {\bf 1719} (1999), 87--93.

\bibitem{GG} J. Gutierrez and D. Gomez-Perez,
`Iterations of multivariate polynomials and discrepancy of pseudorandom numbers',
{\it Lect. Notes in Comp. Sci.\/}, Springer-Verlag,  
Berlin, {\bf 2227} (2001),  192--199.

\bibitem{LN} R. Lidl and H. Niederreiter, {\it Finite fields\/},
Cambridge Univ. Press, Cambridge, 1997.

\bibitem{Nied1} H. Niederreiter, `Quasi-Monte Carlo methods and pseudo-random
numbers', {\it Bull. Amer. Math. Soc.\/}, {\bf 84} (1978), 957--1041.

\bibitem{Nied2}
H. Niederreiter, {\it Random number
generation and Quasi--Monte Carlo methods\/},  SIAM Press, 1992.

 
\bibitem{NiSh2}
H. Niederreiter and I. E. Shparlinski,
`On the distribution of inversive congruential pseudorandom
numbers in parts of the period',
{\it Math. Comp.\/}, {\bf 70} (2001), 1569--1574.

\bibitem{NiSh3}
H. Niederreiter and I. E. Shparlinski,
`On the average distribution of inversive
pseudorandom numbers',
{\it Finite Fields and Their Appl.\/},  
{\bf 8} (2002), 491--503.

\bibitem{NiSh4} H. Niederreiter and I. E. Shparlinski,
`Dynamical systems generated by rational functions',
{\it Lect. Notes in Comp. Sci.\/}, Springer-Verlag,  
Berlin, {\bf 2643} (2003), 6--17.

\bibitem{NiWi}
H. Niederreiter and A. Winterhof,  
`Exponential sums for nonlinear recurring sequences', 
{\it Finite Fields Appl.\/}, {\bf 14} (2008),   59--64. 
 
\bibitem{Ost1}
A. Ostafe,  
`Multivariate permutation polynomial systems and 
pseudorandom number generators',  {\it Finite Fields and Their Appl.\/},  
(2010), 144--154. 

\bibitem{Ost2}
A. Ostafe, 
`Pseudorandom vector sequences derived from triangular polynomial systems with constant multipliers',  {\it Lect. Notes in Comp. Sci.\/}, WAIFI 2010, Springer-Verlag,  
Berlin, (2010), 62--72.

\bibitem{Ost3}
A. Ostafe, 
`Pseudorandom vector sequences of maximal period  generated 
by triangular polynomial dynamical systems', 
{\it Designs, Codes and Cryptography\/}, (to appear).

\bibitem{OPS} A. Ostafe, E. Pelican and I.~E.~Shparlinski,  
`On pseudorandom numbers from
multivariate polynomial systems', 
{\it Finite Fields and Their Appl.\/}, {\bf 16} (2010), 320--328 

\bibitem{OstShp1}
A. Ostafe and I.~E.~Shparlinski,  
`On the degree growth in some polynomial dynamical
systems and nonlinear pseudorandom number generators',
{\it Math. Comp.\/}, {\bf 79} (2010), 501--511. 

\bibitem{OstShp2}
A. Ostafe and I.~E.~Shparlinski,  
`Pseudorandom numbers and hash 
functions from iterations of multivariate polynomials', 
{\it Cryptography and Communications\/}, {\bf 2} (2010),  49--67.

\bibitem{OstShpWin1}
A. Ostafe, I.~E.~Shparlinski  and A.~Winterhof,  
`On the generalized joint linear complexity profile of 
a class of nonlinear pseudorandom multisequences',
 {\it Adv.Math.Comm.\/},  {\bf 4} (2010), 369--379. 
 
\bibitem{OstShpWin2} A. Ostafe, I.~E.~Shparlinski and A. Winterhof,  
`Multiplicative character sums
of a class of nonlinear recurrence vector sequences', 
{\it Intern. J. Number Theory\/}, (to appear).

\bibitem{Schm} K.~Schmidt,
{\it Dynamical systems of algebraic origin\/},
Progress in Math., v.128. Birkh{\"a}user Verlag, Basel, 1995.

\bibitem{Silv1}
J. H. Silverman, {\it The arithmetic of dynamical systems\/},
Springer, New York, 2007.

\bibitem{Silv2}
J. H. Silverman, `Variation of periods modulo $p$ in arithmetic dynamics',
{\it New York J. Math.\/},  {\bf 14}  (2008), 601--616. 

\bibitem{TopWin} A. Topuzo{\v g}lu and
A. Winterhof, `Pseudorandom  sequences', {\it Topics in Geometry,
Coding Theory and Cryptography\/}, Springer-Verlag, Berlin, 
2006, 135--166.

\bibitem{Via} C.-M. Viallet,
`Algebraic dynamics and algebraic entropy',
{\it Int. J. Geom. Methods Mod. Phys.\/},
{\bf 5} (2008),  1373--1391.

\end{thebibliography}
\end{document}